\theoremstyle{plain}
\newtheorem{thm}{Theorem}[section]
\newtheorem{mthm}{Theorem}
\newtheorem{crl}[thm]{Corollary}
\newtheorem{mcrl}[mthm]{Corollary}
\newtheorem{prp}[thm]{Proposition}
\newtheorem{lmm}[thm]{Lemma}
\newtheorem{mlmm}[mthm]{Lemma}
\theoremstyle{remark}
\newtheorem{exm}[thm]{Example}
\newtheorem{rmk}[thm]{Remark}
\newtheorem{mrmk}[mthm]{Remark}
\newtheorem{mnote}[mthm]{Note}
\newcommand {\tb}{\textbf}
\newcommand {\mb}{\mathbb}
\newcommand {\Z}{\mb Z}
\newcommand {\C}{\mb C}
\newcommand {\im}{\textrm{im}}
\newcommand {\colim}{\textrm{colim}\ }
\newcommand {\ex}{\mathrm{excess}}
\begin{document}

\title{On the form of potential spherical classes in $H_*Q_0S^0$}

\author{Peter J. Eccles\footnote{\textit{peter.eccles@manchester.ac.uk}}, Hadi Zare\footnote{\textit{hzare@maths.manchester.ac.uk}}}
\date{}

\maketitle

\abstract{This note is about spherical classes in $H_*Q_0S^0$. A
conjecture, due to Ed. Curtis, predicts that only Hopf invariant one
and Kervaire invariant one elements will give rise to spherical
classes in $H_*Q_0S^0$. Yet, there has been no proof of this
conjecture around. Assuming that this conjecture fails, there must
exist some other spherical classes in $H_*Q_0S^0$. This note
determines the form of these potential spherical classes, and sets
the target for someone who wishes to prove the conjecture, in the
sense that correctness of the Curtis conjecture will be the same as
failure of any classes predicted in this paper being spherical.}

\tableofcontents

\section{Introduction and statement of results}

We start by considering the problem of calculating the image of the
Hurewicz homomorphism
$$h:{_2\pi_*}QX\to H_*QX$$
for ant path connected $CW$-complex $X$ of finite type, where $QX$
is given by $QX=\colim \Omega^i\Sigma^iX$ and $H_*$ denotes
$H_*(-;\Z/2)$. We then restrict our attention to the cases $X=S^n$
with $n>0$. This in return turns out to be fruitful, and we derive
some interesting results about spherical classes in $H_*Q_0S^0$
where $Q_0S^0$ denotes the base point component of $QS^0=\colim
\Omega^iS^i$. Considering the collection of spaces
$\{QS^n:n\geqslant 0\}$ and their homology enables us to determine
potential form of a spherical class in $H_*Q_0S^0$ whereas
considering the collection of spaces $\{Q_0S^{-n}:n\geqslant 0\}$
helps to eliminates some of these potential classes from being
spherical in $H_*Q_0S^0$. Here $QS^{-n}=\Omega Q_0S^{-n+1}$ with
$Q_0S^{-n}$ standing for the base
component of $QS^{-n}$.\\
We start by explaining our approach, and state our results together
with some notes and explanations comparing our results to some other results
that previously have been known.\\
Recall that given any space $Y$, spherical classes in $H_nY$ are
those element which belong to the image of the Hurewicz
homomorphism. This the makes it straightforward to see that if $y\in
H_*Y$ is spherical, then it has two basic properties:
\begin{itemize}
\item $y$ is primitive;
\item $y$ is $A$-annihilated.
\end{itemize}
Here primitive is understood to be primitive with respect to the
co-product induced by the diagonal map $Y\to Y\times Y$. By $y\in
H_*Y$ being $A$-annihilated we mean that
$$\begin{array}{ll}
Sq^i_*y=0 & \textrm{for any }i>0
\end{array}$$
where $Sq^i_*:H_*Y\to H_{*-i}Y$ is the dual to the $i$-th Steenrod
operation $Sq^i:H^*Y\to H^{*+i}Y$. One notes that not every class in
$H_*Y$ may have both properties. Hence the above properties give an
upper bound on the set of all spherical classes in $H_*Y$, although
they do not in general characterise such classes.\\
Our first result describes all $A$-annihilated classes in $H_*QX$
which are of the form $Q^Ix$ for some $x\in H_*X$. We need the
following definition to state this result. Let $n$ be a positive
integer with $n=\sum n_i2^i$ with $n_i\in\{0,1\}$. Define the
function $\rho:\mathbb{N}\to\mathbb{N}$ by
$\rho(n)=\min\{i:n_i=0\}$. We then have the following.

\begin{mthm}
Suppose $Q^Ix\in H_*QX$ is given, with $I=(i_1,\ldots,i_r)$
admissible, and $\ex(Q^Ix)>0$. Such a class is $A$-annihilated if
and only if the following conditions are satisfied:\\
$1$- $x\in\overline{H}_*X$ is $A$-annihilated;\\
$2$- $\ex(Q^Ix)<2^{\rho(i_1)}$;\\
$3$- $0\leqslant 2i_{j+1}-i_j<2^{\rho(i_{j+1})}$;\\
where $\ex(Q^Ix)=i_1-(i_2+\cdots+i_r+\dim x)$. If $l(I)=1$, then the
first two conditions chracterise all $A$-annihilated class of the
form $Q^ix$ of positive excess. Notice that the fact that
$\ex(Q^Ix)>0$ means that $Q^Ix$ is not a square, i.e. it is an
indecomposable.\\
Here $\overline{H}_*$ stands for the reduced homology;
$I=(i_1,\ldots,i_r)$ is called admissible if $i_j\leqslant 2i_{j+1}$
for all $1\leqslant j\leqslant r-1$; and the length of $I$ is
defined by $l(I)=r$.
\end{mthm}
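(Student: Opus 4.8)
The plan is to work in the Dyer--Lashof algebra acting on $H_*QX$ and to reduce the $A$-annihilated condition on $Q^Ix$ to a system of conditions on the individual entries of the admissible sequence $I=(i_1,\ldots,i_r)$. The essential tool is the \emph{Nishida relations}, which express $Sq^j_*Q^i$ as a sum of terms $Q^{i-j+t}Sq^t_*$ (with appropriate binomial coefficients mod $2$). The strategy is to apply $Sq^j_*$ to $Q^Ix$ and demand that the result vanish for all $j>0$; the Nishida relations convert this into a statement about when certain binomial coefficients $\binom{\,\cdot\,}{\,\cdot\,}$ vanish modulo $2$, and Lucas' theorem translates those vanishing conditions into the dyadic inequalities appearing in conditions $2$ and $3$.

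First I would treat the length-one case $Q^ix$, since the statement itself singles this out as the base of an induction. Applying the Nishida relation to $Q^ix$ yields $Sq^j_*Q^ix=\sum_t \binom{i-j}{j-2t}Q^{i-j+t}Sq^t_*x$. The requirement that $x$ be $A$-annihilated (condition $1$) kills all terms with $t>0$, leaving $\binom{i-j}{j}Q^{i-j}x$. Here is where the excess enters: one checks that $Q^{i-j}x$ survives (is nonzero and not forced to vanish for dimension/excess reasons) precisely in a range governed by $\ex(Q^ix)=i-\dim x$, and that $\binom{i-j}{j}\equiv 0\pmod 2$ for all relevant $j>0$ is equivalent, via Lucas, to the single inequality $\ex(Q^ix)<2^{\rho(i)}$. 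This establishes that conditions $1$ and $2$ characterise the length-one case.

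For general length $r$ I would proceed by induction on $l(I)$, writing $Q^Ix=Q^{i_1}Q^{I'}y$ where $I'=(i_2,\ldots,i_r)$ and $y=x$, and using the Nishida relation on the outermost operation $Q^{i_1}$. The interaction term between $Q^{i_1}$ and the leading operation $Q^{i_2}$ of the inner class is precisely what produces condition $3$: the binomial coefficient controlling $Sq^j_*$ applied across the juncture $(i_1,i_2)$ vanishes mod $2$ for all $j>0$ iff $0\leqslant 2i_{j+1}-i_j<2^{\rho(i_{j+1})}$ at that step. One has to be careful that the admissibility hypothesis $i_j\leqslant 2i_{j+1}$ guarantees the relevant intermediate classes are themselves admissible (so no further Adem-type relations collapse them), and that the excess bookkeeping $\ex(Q^Ix)=i_1-(i_2+\cdots+i_r+\dim x)$ is compatible under peeling off $Q^{i_1}$.

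The hard part, and the place where the argument must be most careful, is the bookkeeping that ensures the various $Sq^j_*$-images are genuinely linearly independent in $H_*QX$ so that their coefficients must each vanish separately, rather than cancelling against one another. Because $H_*QX$ is a free allowable $R$-algebra (a polynomial algebra on admissible $Q^Ix$ with $\ex>\dim$ or similar), I would invoke the known additive basis to argue that distinct admissible monomials appearing after applying the Nishida relations do not interfere; the delicate cases are those where applying $Sq^t_*$ to an inner factor produces a non-admissible sequence that must be re-expressed via the Adem relations, potentially merging with another term. Handling these merges correctly — and confirming that they do not introduce spurious cancellations that would weaken conditions $2$ and $3$ — is where the real work lies, and I expect to isolate it in a lemma about the mod $2$ vanishing of the combined binomial coefficients before assembling the full equivalence.
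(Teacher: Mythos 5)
Your outline correctly identifies the Nishida relations plus Lucas' theorem as the engine, and your treatment of the length-one case is essentially sound. The genuine gap is in your inductive framework for general length: $A$-annihilatedness is \emph{not} inherited by the inner class, so the inductive hypothesis you want to invoke after peeling off $Q^{i_1}$ is simply unavailable. The paper itself contains a concrete witness (Wellington's example): $Q^{1031}Q^{519}Q^{263}Q^{135}Q^{71}Q^{39}$ is $A$-annihilated, since its excess is $4<8=2^{\rho(1031)}$ and $2i_{j+1}-i_j=7<2^{\rho(i_{j+1})}=8$ at every juncture, yet the inner class $Q^{519}Q^{263}Q^{135}Q^{71}Q^{39}$ has excess $519-508=11\geqslant 8=2^{\rho(519)}$, so by the theorem it is \emph{not} $A$-annihilated. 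Hence the equivalence ``$Q^Ix$ is $A$-annihilated iff the juncture condition at $(i_1,i_2)$ holds and the inner class is $A$-annihilated'' is false, and no induction on $l(I)$ of the shape you describe can close. What must replace your inductive hypothesis is a quantitative excess-tracking statement: if every juncture of $I$ satisfies condition $3$, then every admissible $Q^K$ occurring in $N(Sq^a_*,Q^I)$ has $\ex(K)\leqslant\ex(I)-2^{\rho(i_1)}$ (Curtis's lemma, Lemma 2.2 in the paper). Combined with condition $2$ this forces every surviving term to have negative excess, hence to vanish --- that single lemma is the whole sufficiency proof, and it is exactly what your last paragraph defers rather than supplies.

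The necessity direction is likewise unresolved in your proposal. You correctly flag the cancellation problem, but ``invoking the known additive basis'' does not tell you which operation to apply, and for large $l(I)$ one cannot afford to re-admissibilize via Adem relations (the paper stresses this is practically impossible; see its Example 2.1, where a term that looks nonzero dies by an Adem relation). The paper's device is to apply one carefully chosen operation whose index is a power of $2$: $Sq^{2^{\rho(i_1)}}_*$ when condition $2$ fails, $Sq^{2^{\rho(i_{j+1})+j}}_*$ when condition $3$ fails at position $j$, and $Sq^{2^{t+r}}_*$ (with $t$ minimal such that $Sq^{2^t}_*x\neq 0$) when condition $1$ fails. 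Each choice makes the Nishida relations produce a leading term that is already admissible, has excess $\geqslant 0$ (indeed equal to $\ex(Q^Ix)$ in the last two cases), and dominates all remaining terms in excess or in the dimension of the class it ends on, so it cannot cancel. Without this mechanism, your proposed ``lemma about the mod $2$ vanishing of the combined binomial coefficients'' is a restatement of the problem rather than a proof; as it stands, the two steps that actually constitute the paper's proof are both missing.
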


\begin{mrmk}
Notice that if $Q^Ix$ is an $A$-annihilated class with $Q^Ix$ being
of positive excess, then $I$ cannot have any even entry. This is
easy to see, once we observe that having $i_1$ even implies that
$\rho(i_1)=0$. This together with condition $2$ of the above theorem
implies that $\ex(Q^Ix)<2^0=1$, i.e. $\ex(Q^Ix)=0$ which is a
contradiction. Moreover, if there exists $j>1$ with $i_j$ even, then
condition $3$ implies that $i_{j-1}$ is even. Iterated application
of this will imply that $i_1$ is even which leads to a
contradiction.
\end{mrmk}

Conditions $(2)-(3)$ were used in Curtis's work \cite[Theorem
6.3]{3} to describe the $0$-line of the $E_2$-term of the unstable
Adams spectral sequence converging to the $2$-primary component of
$\pi_*\Omega^nS^{n+k}$ were his condition (1), corresponding to our
condition (2), is adapted to work for any space $\Omega^nS^{n+k}$.
Curtis claims that these conditions describe a basis for the
$0$-line of the $E_2$-term of the unstable Adams spectral sequence,
whereas Wellington \cite[Remark 11.26]{15} shows that this claim is
not valid for the case $k=0$. In fact Curtis's claim is correct in odd
dimensions, and can fail only in even dimensions. This later is a
rather nontrivial corollary of Theorem 1 which is an outcome of its
proof.

\begin{mcrl}
Let $x\in\overline{H}_*X$ be fixed. Suppose
$$\xi=\sum Q^Ix$$
is $A$-annihilated, with $\ex(Q^Ix)>0$, and $I$ runs over certain
admissible sequences. Then each term $Q^Ix$ is $A$-annihilated. In
particular any odd dimensional class of the above form has this
property.
\end{mcrl}

\begin{mnote}
We note that the above corollary reflect an important fact about the
Dyer-Lashof algebra described as following. Assume that $Q^I+Q^J\in
R$ is $A$-annihilated, where $R$ is the Dyer-Lashof algebra. Assume
that $l(I)=l(J)$, $\ex(Q^I)>0$ and $\ex(Q^J)>0$. Then $Q^I$ and
$Q^J$ both are $A$-annihilated. This claim is not true if we remove
the condition on the excess, i.e. there are counter examples if we
allow some terms of trivial excess. In the next section, we will
analyse an example due to Wellington \cite[Remark 11.26]{15}. This
fixes previous theorem of Curtis \cite[Theorem 6.3]{3}.
\end{mnote}
A class of important examples satisfying the conditions of Corollary
3 is provided by cases $X=S^n$ with $n>0$. Notice that $H_*S^n$ is
given by $E_{\Z/2}(g_n)$, the exterior algebra on on generator
$g_n\in H_nS^n$. Then we have
$$H_*QS^n\simeq\Z/2[Q^Ig_n:I\textrm{ admissible },\ex(Q^Ig_n)>0].$$
Now suppose $\xi_n\in H_*QS^n$ is an odd dimensional $A$-annihilated
class, e.g. an odd dimensional spherical class spherical. We then
may write
$$\xi_n=\sum Q^Ig_n$$
with $\ex(Q^Ig_n)>0$. Combining this with Corollary 3 we obtain the
following.
\begin{mlmm}
Let $\xi_n=\sum Q^Ig_n\in H_*QS^n$ be an odd dimensional
$A$-annihilated class. Then each $Q^Ig_n$ is $A$-annihilated, i.e.
it satisfies conditions of Theorem $1$.
\end{mlmm}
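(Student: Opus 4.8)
The plan is to recognise this Lemma as the case $(X,x)=(S^n,g_n)$ of Corollary 3, reinforced at the end by Theorem 1. First I would set up the algebra: since $\overline{H}_*S^n=\Z/2\{g_n\}$ is concentrated in the single degree $n$, one has the standard identification $$H_*QS^n\simeq\Z/2[\,Q^Ig_n : I\ \text{admissible},\ \ex(Q^Ig_n)>0\,],$$ whose polynomial generators are exactly the positive-excess, hence indecomposable (non-square), classes $Q^Ig_n$. Consequently, in the given expression $\xi_n=\sum Q^Ig_n$ every summand automatically has $\ex(Q^Ig_n)>0$; this is consistent with odd-dimensionality, since squares are even-dimensional and carry zero excess. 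This positive-excess normal form is precisely the input required by Corollary 3.

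Next I would apply Corollary 3 with the fixed class $x=g_n\in\overline{H}_*S^n$: the sum $\xi_n=\sum Q^Ig_n$ is $A$-annihilated, each term has positive excess, and $I$ ranges over a fixed family of admissible sequences of equal total degree. Its conclusion yields at once that every term $Q^Ig_n$ is individually $A$-annihilated; indeed the Lemma is essentially the final sentence of Corollary 3 specialised to $X=S^n$.

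Finally I would feed each term into Theorem 1. Each $Q^Ig_n$ is now a class $Q^Ix$ with $x=g_n$ of positive excess and $A$-annihilated, so Theorem 1 forces it to satisfy conditions $(1)$--$(3)$. Condition $(1)$ holds trivially, as $Sq^i_*g_n\in\overline{H}_{n-i}S^n=0$ for every $i>0$, while conditions $(2)$--$(3)$ are exactly the content of Theorem 1 for these terms.

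I expect no genuine obstacle at the level of the Lemma itself: the substance sits upstream, in the proof of Theorem 1 and its Corollary 3, namely in the fact that for positive-excess terms the leading parts of the Nishida-relation expansions of $Sq^i_*Q^Ig_n$ do not cancel between distinct $I$, so that $A$-annihilation of a sum is inherited termwise. The one point deserving a line of care is the reduction to the normal form $\xi_n=\sum Q^Ig_n$ as a sum of single generators---the form in which the class is presented here---after which the Lemma follows formally from the two earlier results.
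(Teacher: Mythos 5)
Your proposal is correct and takes essentially the same route as the paper, which disposes of this Lemma by declaring it ``just a special case of Corollary 3'' applied with $X=S^n$, $x=g_n$, exactly as you do: odd-dimensionality forces every summand $Q^Ig_n$ to have positive excess (excess-zero terms are squares, hence even dimensional), Corollary 3 then gives termwise $A$-annihilation, and Theorem 1 supplies the conditions, with condition $(1)$ holding trivially since $\overline{H}_*S^n$ is concentrated in degree $n$.
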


The advantage of working with spherical classes is that they do pull
back. In this case, we can obtain a much stronger result as
following.

\begin{mcrl}
Suppose $n>0$ and $\xi_n\in H_*QS^n$ is spherical given by
$$\xi_n=\sum Q^Ig_n$$
modulo decomposable terms, with $\ex(Q^Ig_n)>0$. Then each $Q^Ig_n$
is $A$-annihilated.
\end{mcrl}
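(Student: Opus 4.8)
The plan is to exploit the two extra features that a spherical class enjoys beyond being $A$-annihilated, namely that it is primitive and that sphericity is preserved under the homology suspension, in order to reduce to the situation already covered by the preceding Lemma.

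First I would record the Hopf-algebra structure of $H_*QS^n=\Z/2[Q^Ig_n:\ex(Q^Ig_n)>0]$. A Cartan-formula computation on the coproduct (using that $g_n$ is primitive and that $Q^j$ kills the unit for $j>0$) shows that every polynomial generator $Q^Ig_n$ is primitive, so the primitive elements are exactly the span of the $2^k$-th powers $(Q^Ig_n)^{2^k}$. Consequently an \emph{odd}-dimensional primitive class has no decomposable summands at all (a nontrivial $2^k$-th power is even-dimensional), so in odd dimensions the hypothesis ``$\xi_n=\sum Q^Ig_n$ modulo decomposables'' already forces $\xi_n=\sum Q^Ig_n$ exactly. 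Being spherical it is $A$-annihilated, and the preceding Lemma disposes of this case at once.

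For the even-dimensional case I would pass to $QS^{n+1}$ through the homology suspension $\sigma_*:H_*QS^n\to H_{*+1}QS^{n+1}$ arising from $QS^n\simeq\Omega QS^{n+1}$. I use three standard properties: $\sigma_*$ commutes with the $Sq^i_*$ and with the Dyer--Lashof operations and sends $g_n\mapsto g_{n+1}$, so that $\sigma_*Q^Ig_n=Q^Ig_{n+1}$; it annihilates decomposables; and, by adjointness of $\Sigma$ and $\Omega$, it carries spherical classes to spherical classes. The key bookkeeping point is the parity identity $\ex(Q^Ig_n)\equiv\dim(Q^Ig_n)\pmod 2$ (their sum is $2i_1$), so in even dimensions every term has even, hence $\geq 2$, excess; therefore no generator is carried to a square, the decomposable summands die, and $\sigma_*\xi_n=\xi_{n+1}=\sum Q^Ig_{n+1}$ is a genuine sum of positive-excess generators that is odd-dimensional and spherical. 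Applying the Lemma at level $n+1$ shows each $Q^Ig_{n+1}$ is $A$-annihilated, i.e. satisfies the conditions of Theorem $1$.

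It then remains to descend these conditions from level $n+1$ to level $n$. Conditions (1) and (3) of Theorem $1$ are independent of the base dimension, and the Remark forces $i_1$ odd, hence $\rho(i_1)\geq 1$; so the only issue is condition (2), $\ex(Q^Ig_n)<2^{\rho(i_1)}$. Since $\ex(Q^Ig_n)=\ex(Q^Ig_{n+1})+1$, the level-$(n+1)$ bound yields only $\ex(Q^Ig_n)\leq 2^{\rho(i_1)}$, and the \emph{main obstacle} is to exclude the boundary case $\ex(Q^Ig_n)=2^{\rho(i_1)}$ (equivalently $\ex(Q^Ig_{n+1})=2^{\rho(i_1)}-1$): this value is consistent with everything visible at level $n+1$, yet it would violate $A$-annihilation at level $n$. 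I would settle it by a direct Nishida-relation computation at level $n$, where condition (2) arises precisely as the threshold past which some $Sq^r_*$ acts nontrivially on $Q^Ig_n$. For a term lying exactly on the boundary I would isolate the indecomposable part of $Sq^r_*\xi_n$ and show that the offending contribution cannot be cancelled, since it is indecomposable while $Sq^r_*$ of the square summands is decomposable, contradicting the fact that the spherical class $\xi_n$ is $A$-annihilated. This cancellation analysis, showing that the boundary term survives in $Sq^r_*\xi_n$, is the crux of the argument.
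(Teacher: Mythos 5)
Your treatment of the odd-dimensional case is correct and matches the paper, and your parity observation (in even dimensions every term has even, hence at least $2$, excess, so $\sigma_*\xi_n$ is an honest sum of positive-excess generators) makes the suspension step clean. The gap is exactly at the point you call the crux, and your proposed resolution rests on a false claim. Suppose a term satisfies conditions $(1)$ and $(3)$ but sits on the boundary, $\ex(Q^Ig_n)=2^{\rho(i_1)}$. By Curtis's lemma (Lemma 2.2 of the paper), for \emph{every} $Sq^a_*$ the admissible terms of $N(Sq^a_*,Q^I)$ have excess at most $\ex(Q^Ig_n)-2^{\rho(i_1)}=0$; and since $Sq^t_*g_n=0$ for $t>0$, the entire contribution of the boundary term to $Sq^a_*\xi_n$ is a sum of excess-zero terms, i.e.\ of squares. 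In particular the ``offending contribution'' of $Sq^{2^{\rho(i_1)}}_*$ is $(Q^{i_2}\cdots Q^{i_r}g_n)^2$, which is \emph{decomposable}, not indecomposable. Meanwhile the decomposable part of the primitive class $\xi_n$ is a sum of squares $P^2$, and $Sq^{2a}_*(P^2)=(Sq^a_*P)^2$ is again a sum of squares; so the boundary term's image lies in precisely the same decomposable piece as the image of the square summands, and no indecomposability argument can rule out cancellation. This is exactly the phenomenon of Wellington's example quoted in the paper (a sum involving an excess-zero square can be $A$-annihilated although no summand is), and it is the reason Corollary 3 carries the hypothesis $\ex>0$ on all terms.

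The repair is to run the adjunction in the opposite direction, which is what the paper does: spherical classes not only suspend, they \emph{pull back}. An even-dimensional spherical $\xi_n$ desuspends to an odd-dimensional spherical class $\xi_{n-1}=\sum Q^Ig_{n-1}\in H_*QS^{n-1}$ (with no decomposable part, by your own parity argument), and Lemma 5 applied at the \emph{lower} level gives $\ex(Q^Ig_{n-1})<2^{\rho(i_1)}$. Since suspension lowers excess by one, this yields $\ex(Q^Ig_n)\leqslant 2^{\rho(i_1)}-2$, safely inside condition $(2)$: the boundary case simply never occurs for spherical classes, but one needs the desuspension to see it. In short, apply Lemma 5 at the level where the excess condition is strongest and let suspension weaken it, rather than at the level where it is weakest and try to strengthen it on the way down. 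Note that the paper's argument requires $n>1$ so that $H_*QS^{n-1}$ is still a polynomial algebra on the $Q^Ig_{n-1}$; the case $n=1$ is handled separately via the analysis of primitives in $H_*Q_0S^0$. Your upward reduction would have avoided that case split entirely, which makes it an attractive route, but as it stands the boundary case blocks it.
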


The proof of is obvious when $n>1$. If $\xi_n$ is odd dimensional,
then this is just the statement of Lemma 5. If $\xi_n$ is even
dimensional, then it pulls back to an odd dimensional spherical
class $\xi_{n-1}\in H_*QS^{n-1}$. Now applying Lemma 5 to
$\xi_{n-1}$ proves the lemma (and even a bit more!). But, the proof
for the case $n=1$ depends on some observations about
$A$-annihilated primitive classes in $H_*Q_0S^0$.\\
To state our next result we need to recall some facts about
$H_*Q_0S^0$ and its submodule of primitive classes. Recall that
$\pi_0QS^0\simeq\Z$. Given $n:S^0\to QS^0$ we let $[n]\in H_0Q_nS^0$
be the image of image of $1\in\overline{H}_0S^0$, the generator of
the non-base-point component in $H_0QS^0$ under the Hurewicz map
$\pi_0QS^0\to H_0QS^0$. One then has $[n]*[m]=[n+m]$. The homology
ring $H_*Q_0S^0$ is given by \cite[Part I, Lemma 4.10]{1}
$$H_*Q_0S^0\simeq\Z/2[Q^Ix_i:\ex(Q^Ix_i)>0, (I,i)\textrm{ admissible}]$$
where $x_i=Q^i[1]*[-2]$ with $*$ being the loop sum in $H_*QS^0$.
Regarding the submodule of primitives classes in this ring, there
are different ways to describe this submodule. First, we note that
the odd dimensional class $x_{2n+1}$ gives rise to a unique
primitive class, say $p_{2n+1}$, i.e. modulo decomposable terms we
have
$$p_{2n+1}=x_{2n+1}.$$
We also may define $p_{2n}=p_n^2$. Then it is well known \cite[Page
92, First Description]{100} that any primitive class in $H_*Q_0S^0$
can be written as a linear combination of terms of the form
$Q^Ip_{2n+1}$ with $I$ admissible, and not necessarily $(I,2n+1)$.
Second, we note that $x_1$ is primitive. It is well known that
applying Kudo-Araki operations $Q^i$ to any primitive class, results
in another primitive class. In particular, we obtain primitive
classes $Q^{2n}x_1$. Now, set
$$p_{2n+1}'=p_{2n+1}+Q^{2n}x_1.$$
Similarly, we may define $p'_{2n}=(p'_n)^2$. From the above
description it is obvious that any primitive class in $H_*Q_0S^0$
can be written in terms of $Q^Ip'_{2n+1}$ with $I$ admissible, and
not necessarily $(I,2n+1)$. This is the basis described by Madsen
\cite[Proposition 6.7]{8}. This is of course our favorite basis, as
$x_{2i+1}$ and $p'_{2i+1}$ show similar behavior under the action of
the Steenrod algebra. Regarding the problem of spherical classes in
$H_*Q_0S^0$ it is well known that there is Hopf invariant one
element in ${_2\pi_*}Q_0S^0$ if and only if $p'_{2i+1}$ is spherical
which will happen only if $2n+1=2^s-1$ for some $s>0$. Moreover, it
is well known there is Kervaire invariant one element in
${_2\pi_*}Q_0S^0$ if and only if $Q^{2n+1}p'_{2n+1}=(p'_{2n+1})^2$
is spherical which can happen only if $2n+1=2^s-1$ for some $s>0$
\cite[Proposition 7.3]{8}. This latter result has also been proved
by others in various equivalent form, see for example
\cite[Proposition 4.1]{4}, \cite[Theorem A]{14}. It is also known
that $Q^kp'_{2n+1}$ can not be spherical if $k\neq 2n+1$
\cite[Chapter 2, Remark 5]{100}. Our next result, which is the main
goal of this paper settles down the rest of potential spherical
classes in $H_*Q_0S^0$ and describes their form.\\ \\
\textbf{Main Theorem.} \textit{Let $\theta\in H_*Q_0S^0$ be a
spherical class which is not a Hopf invariant one class, neither a
Kervaire invariant one class. Then
$\theta$ satisfies one of the the following cases.\\
$1$- If $\sigma_*\theta\neq 0$ and $\theta$ is an odd dimensional
class, then
$$\theta=\sum Q^Ip'_{2i+1},$$
with $l(I)>1$ such that each of terms $Q^Ip'_{2i+1}$ in the above
sum is $A$-annihilated.\\
$2$- If $\sigma_*\theta\neq 0$ and $\theta$ is an even dimensional
class, then
$$\theta=\sum Q^Ip'_{2i+1}+P^2,$$
with $l(I)>1$ where $I$ has only has odd entries. In this case
$(I,2i+1)$ satisfies condition $3$ of Theorem $2$, i.e.
$0<2i_{j+1}-i_j<2^{\rho(i_{j+1})}$ for $1\leqslant j\leqslant r$
with $i_{r+1}=2i+1$. Moreover, $\ex(Q^Ip'_{2i+1})-1<2^{\rho(i_1)}$
for every $Q^Ip'_{2i+1}$ involved in the above sum. Here $P$ is a
primitive term. If $P\neq 0$, then it is of odd dimension. If $P=0$,
then each term in the above expression for $\theta$ is
$A$-annihilated.\\
$3$- If $\sigma_*\theta=0$, then $\theta=\xi^2$, with $\xi$ an odd
dimensional $A$-annihilated primitive class, i.e.
$$\theta=(\sum Q^Ip'_{2i+1})^2,$$
with $l(I)>0$ such that each of terms $Q^Ip'_{2i+1}$ in the above
sum is $A$-annihilated.\\
Moreover, assume that $f\in{_2\pi_*}Q_0S^0$ be a class with
$hf=\theta$. Then $f$ maps nontrivially under the projection
$${_2\pi_*}Q_0S^0\rightarrow{_2\pi_*}\mathrm{Coker }J$$
where $J$ is the $J$-homomorphism.\\
In all of the above cases $(I,2i+1)$ is supposed to be
admissible.}\\

Along the line, we establish a natation for $H_*Q_0S^{-1}$ which
previous was known to be an exterior algebra \cite[Theorem 1.1]{2}.
This will involve an observation on the image of homology of the
complex transfer, viewed as a map
$$Q\Sigma\C P_+\to Q_0S^0.$$
Moreover, a calculation for the submodule of primitive classes in
$H_*Q\C P$ will be presented, and will be applied in proving a part
of our main theorem.\\
This paper is organised following. We prove Theorem 1, and
corollaries implies by it, in Section 1. We then move to prove our
main theorem. We prove our main theorem in several steps, as the
proof is quite long. We have use single numbering in this section,
where as in other sections, the results are labeled based on the
number of section.\\
\tb{Acknowledgement.} This paper reports some of results announced in second named authors' PhD thesis \cite{100}, supervised by the 
first named author at the University of Manchester. The second named author is grateful to his supervisor for his support during this time. He 
and  wishes to express deep gratitude towards the school of Mathematics, and his own family
for the support during his PhD.

\section{Proof of Theorem 1}

We start by recall some fact about $H_*QX$ where $X$ is an arbitrary
path connected space. Recall that having fixed an additive basis
$\{x_\alpha\}$ for $\overline{H}_*X$, with $X$ path connected, then
$H_*QX$ is a polynomial algebra with generators given by the symbols
$Q^Ix_\alpha$, with $I$ admissible. Allowing the empty sequence
$\phi$ to be an admissible sequence, with $Q^\phi x=x$, and
$\ex(\phi)=+\infty$, then we can see that $Q^Ix$ is a decomposable
if and only if $\ex(Q^Ix_\alpha)=0>0$ where in this case
$Q^Ix=(Q^{i_2}\cdots Q^{i_r}x)^2$ with $I(i_1,\ldots, i_r)$. Hence,
Theorem 2 determines all $A$-annihilated classes in $H_*QX$ classes
of the form $Q^Ix$ with are not square. Notice that in general, any
class in $H_*QX$ involving at least one term $Q^Ix$ of positive
excess with $I$ determines a nonzero class in $QH_*QX$, the module
of indecomposables of $H_*QX$, i.e. it gives rise to an
indecomposable element.

We only use the \textit{Nishida relations}. The Nishida relation is
given as following \cite[Part I, Theorem 1.1(9)]{1},
\begin{equation}
\begin{array}{lll}
Sq^a_*Q^b&=&\sum_{r\geqslant 0}{b-a\choose a-2r}Q^{b-a+r}Sq^r_*.
\end{array}
\end{equation}

Notice that $Sq^r_*Q^I$ with $l(I)>1$ may be computed by iterated
use of the Nishida relations. One observes that \textit{the Nishida
relations respect the length}, i.e. if
$$Sq^a_*Q^I=\sum Q^KSq^{a^K}_*,$$
then $l(I)=l(K)$.\\

Let $R$ denote the Dyer-Lashof algebra. Then according to
\cite[Equation 3.2]{9} the Nishida relations maybe used to define an
action $N:A\otimes R\to R$ as following
\begin{eqnarray}
N(Sq^a_*,Q^b)&=&{b-a\choose a}Q^{b-a},\\
N(Sq^a_*,Q^{i_1}\cdots Q^{i_r}) & = & \sum{i_1-a\choose
a-2t}Q^{i_1-a+t}N(Sq^t_*,Q^{i_2}\cdots Q^{i_r}).
\end{eqnarray}
In other words suppose $Sq^a_*Q^I=\sum Q^KSq^{a^K}_*$ where
$a^K\in\Z$. Then we have
\begin{equation}
\begin{array}{lll}
N(Sq^a_*,Q^I)&=&\sum_{a^K=0}Q^K.
\end{array}
\end{equation}
We note that if a sequence $I$ is admissible, then it is not clear
whether or not after applying $Sq^a_*$ we will get a sum of
admissible terms, i.e we may need to use the Adem relations to
rewrite terms in admissible form. This means that we may decide
about vanishing or non-vanishing of a homology class $Sq^a_*Q^Ix$
after rewriting it in admissible form.

\begin{exm}
Consider $Q^9Q^5g_1$ which is an admissible term. One has
$$Sq^4_*Q^9Q^5g_1=Q^7Q^3g_1,$$
where $Q^7Q^3$ is not admissible. Although it may look nontrivial,
however the Adem relation $Q^7Q^3=0$ implies that $Q^7Q^3g_1=0$.
Indeed the class $Q^9Q^5g_1$ is not $A_*$-annihilated, which can be
seen by applying $Sq^2_*$ as we have
$$Sq^2_*Q^9Q^5g_1=Q^7Q^5g_1\neq 0.$$
Notice that the right hand side of the above equation is an
admissible term.
\end{exm}

According to the above example, part of the job in distinguishing
between $A$-annihilated and not-$A$-annihilated classes $Q^Ix$ is to
choose the right operation $Sq^a_*$ in a way that the outcome is
admissible and there is no need to use the Adem relations after the
Kudo-Araki operation. The reason being that it is practically
impossible to use the Adem relations when $l(I)$ is big. The
following lemma \cite[Lemma 6.2]{3} tells us when it is not possible
to choose the ``right'' operation and provides us with the main tool
towards the proof of Theorem 1.

\begin{lmm}
Suppose $I$ is an admissible sequence such that $2i_{j+1}-i_j<
2^{\rho(i_{j+1})}$ for all $1\leqslant j\leqslant r-1$. Let
$$N(Sq^a_*,Q^I)=\sum_{K\textrm{ admissible}} Q^K.$$

Then
$$\ex(K)\leqslant \ex(I)-2^{\rho(i_1)}.$$
\end{lmm}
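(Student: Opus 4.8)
The plan is to induct on the length $r=l(I)$, feeding on two structural facts about the Dyer--Lashof algebra: the Nishida recursion $(3)$ for the action $N(Sq^a_*,-)$, and the behaviour of the Adem relations. For a monomial $Q^K$ write $\|K\|=k_1+\cdots+k_m$ for its total internal degree, so that $\ex(K)=2k_1-\|K\|$. Rewriting $Q^K$ in admissible form preserves $\|K\|$ and never raises the leading index $k_1$, hence never raises $\ex(K)$. Since $N(Sq^a_*,Q^I)$ lowers total degree by exactly $a$, every admissible $K$ occurring in it satisfies $\|K\|=\|I\|-a$. Moreover each such $K$ is produced, by admissible reduction, from a summand $Q^{i_1-a+t}\,N(Sq^t_*,Q^{i_2}\cdots Q^{i_r})$ of $(3)$ with $\binom{i_1-a}{a-2t}$ odd; the leading index before reduction is $i_1-a+t$, so
$$\ex(K)\leqslant 2(i_1-a+t)-\|K\|=\ex(I)-(a-2t).$$
Thus it suffices to prove $a-2t\geqslant 2^{\rho(i_1)}$ for every $t$ that genuinely contributes a term.

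The arithmetic heart is a single $2$-adic lemma, proved by Lucas'/Kummer's theorem: if $\binom{b-c}{c}$ is odd then $2^{\rho(b)}\mid c$ (so $c\geqslant 2^{\rho(b)}$ once $c\geqslant 1$). Indeed the low $\rho(b)$ binary digits of $b$ are all $1$, and if $c$ had a set bit below $\rho(b)$ the corresponding bit of $b-c$ would be $0$, contradicting $c\subseteq b-c$. Applying this to the innermost operation and propagating it through $(3)$ gives, as a companion statement carried along the induction, that every $t$ with $N(Sq^t_*,Q^{i_2}\cdots Q^{i_r})\neq 0$ satisfies $2^{\rho(i_2)}\mid t$. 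A second elementary binary fact, using admissibility $i_1\leqslant 2i_2$ together with the hypothesis $2i_2-i_1<2^{\rho(i_2)}$, is that $\rho(i_1)\leqslant\rho(i_2)$: no integer in the interval $(2i_2-2^{\rho(i_2)},\,2i_2]$ can have more than $\rho(i_2)$ trailing ones.

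With these in hand the generic terms are immediate. For $t=0$ the base lemma applied to $\binom{i_1-a}{a}$ gives $a\geqslant 2^{\rho(i_1)}$. For $t\geqslant 1$ with $a-2t\geqslant 1$, the companion statement together with $\rho(i_1)\leqslant\rho(i_2)$ yields $2^{\rho(i_1)}\mid 2t$, so $i_1-2t$ has at least $\rho(i_1)$ trailing ones, i.e. $\rho(i_1-2t)\geqslant\rho(i_1)$; the base lemma applied to $\binom{(i_1-2t)-(a-2t)}{a-2t}$ then gives $a-2t\geqslant 2^{\rho(i_1-2t)}\geqslant 2^{\rho(i_1)}$. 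In both cases the displayed bound finishes the step.

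The one configuration the degree bookkeeping does not settle is $a=2t$ (so $a-2t=0$), which I expect to be the main obstacle. Here the coefficient $\binom{i_1-a}{0}=1$ is automatically odd, so admissibility must be exploited directly. Combining the inductive excess estimate for the inner factor (which forces its leading index $l_1$ to satisfy $2l_1\leqslant 2i_2-2^{\rho(i_2)}-t$) with the hypothesis $i_1>2i_2-2^{\rho(i_2)}$ gives $i_1-t>2l_1$, i.e. the prepended product $Q^{i_1-t}Q^L$ is \emph{inadmissible}. One must then show that its admissible reduction either vanishes or already has excess at most $\ex(I)-2^{\rho(i_1)}$. When $i_1$ is even ($\rho(i_1)=0$) the crude leading-index bound after one Adem step suffices; the delicate situation is $i_1$ odd, where the surviving terms are controlled only by the precise $2$-adic vanishing of the Adem coefficients $\binom{i-l_1-1}{2i-(i_1-t)}$ in the barely-inadmissible range. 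Verifying this vanishing uniformly over the inner terms $L$ permitted by the induction is the technical crux, and it is exactly the point where the hypothesis on consecutive entries of $I$ is indispensable.
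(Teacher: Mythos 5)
Your argument reproduces, correctly, the standard reduction for this lemma: the bookkeeping $\ex(K)\leqslant\ex(I)-(a-2t)$ (using that Adem rewriting preserves total degree and never raises the leading index), the Lucas-type lemma that $\binom{b-c}{c}$ odd forces $2^{\rho(b)}\mid c$, the observation $\rho(i_1)\leqslant\rho(i_2)$, the companion divisibility $2^{\rho(i_2)}\mid t$ carried through the induction, and the disposal of all summands with $a-2t\geqslant 1$ are all sound. But the configuration $a=2t$, $t\geqslant 1$ is not a loose end one may "expect" to handle later --- it is where the entire content of the lemma lives, and your proposal stops exactly there, so the proof is incomplete. In that case the Nishida coefficient $\binom{i_1-a}{0}=1$ is automatically odd, your degree bookkeeping yields only $\ex(K)\leqslant\ex(I)$, and the crude one-Adem-step estimate (new leading index at most $\lfloor(i_1-t)/2\rfloor+l_1$, with $2l_1\leqslant 2i_2-2^{\rho(i_2)}-t$ from the inductive excess bound) improves this only to $\ex(K)\leqslant\ex(I)-\bigl(2^{\rho(i_2)}-(2i_2-i_1)\bigr)-\varepsilon\leqslant\ex(I)-1$, with $\varepsilon\in\{0,1\}$; this settles $\rho(i_1)\leqslant 1$ and nothing more.

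That the remaining work is genuinely $2$-adic and not formal is visible already in length two: $I=(19,11)$ satisfies the hypotheses ($19\leqslant 22$ and $22-19=3<4=2^{\rho(11)}$), and $\rho(19)=2$, so the lemma demands $\ex(K)\leqslant 8-4=4$. For $a=8$, $t=4$ the summand is $Q^{15}N(Sq^4_*,Q^{11})=Q^{15}Q^{7}$ with coefficient $\binom{11}{0}\binom{7}{4}$ odd; the crude estimate gives only $\ex\leqslant 6>4$, and the statement survives solely because $Q^{15}Q^{7}=0$ outright (the Adem range $15/2\leqslant m\leqslant 15-7-1$ is empty). Proving such vanishing, or an excess drop by the full $2^{\rho(i_1)}$, uniformly over all inner terms $Q^L$ allowed by the induction is exactly the Adem-coefficient analysis you defer; it is the content of Wellington's Theorems 7.11--7.12 and Lemma 12.5 \cite{15}. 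Note also that the paper itself gives no proof of this lemma: it quotes it from Curtis \cite[Lemma 6.2]{3} and points to Wellington for an alternative derivation, so there is no in-paper argument to compare against. Measured against the literature, your write-up carries out the routine part correctly and leaves open precisely the step that makes the lemma a theorem.
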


The above lemma can also be obtained by combining \cite[Theorem
7.11]{15}, \cite[Theorem 7.12]{15} and \cite[Lemma 12.5]{15}. Now we
are ready to prove Theorem 1. We break it into little lemmata. The
following proves Theorem 1 in one direction.

\begin{lmm}
Let $x\in H_*X$ be $A$-annihilated, and $I$ an admissible sequence
with $\ex(Q^Ix)>0$ such that\\
$1$- $\ex(Q^Ix)<2^{\rho(i_1)}$;\\
$2$- $2i_{j+1}-i_j< 2^{\rho(i_{j+1})}$ for all $1\leqslant
j\leqslant r-1$;\\
Then $Q^Ix$ is $A$-annihilated.
\end{lmm}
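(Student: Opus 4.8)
The plan is to verify directly that $Sq^a_* Q^Ix = 0$ for every $a>0$, using the Nishida relations to reduce the computation to the operator $N(Sq^a_*,Q^I)$ and then controlling the excess of the resulting terms by the previous lemma.

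First I would fix $a>0$ and expand $Sq^a_*Q^Ix$ by the Nishida relations, writing $Sq^a_*Q^I=\sum_K Q^K Sq^{a^K}_*$ with $l(K)=l(I)$ (the Nishida relations respect the length) and $a^K\in\Z$. Applying this to $x$ gives $Sq^a_*Q^Ix=\sum_K Q^K Sq^{a^K}_* x$. The terms with $a^K<0$ vanish because negative Steenrod operations are zero, and the terms with $a^K>0$ vanish because $x$ is $A$-annihilated, so $Sq^{a^K}_*x=0$. Hence only the terms with $a^K=0$ survive, and by the definition of $N$ we obtain $Sq^a_*Q^Ix=N(Sq^a_*,Q^I)x$.

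Next I would rewrite $N(Sq^a_*,Q^I)$ in admissible form, say $N(Sq^a_*,Q^I)=\sum_{K\textrm{ admissible}}Q^K$, using the Adem relations; this is legitimate since those relations are identities in $R$. Condition $2$ is precisely the hypothesis of the previous lemma, so that lemma applies and yields $\ex(K)\leqslant\ex(I)-2^{\rho(i_1)}$ for every admissible $K$ occurring, where $\ex(I)=i_1-(i_2+\cdots+i_r)$ is the intrinsic Dyer--Lashof excess. Subtracting $\dim x$ and using $\ex(Q^Kx)=\ex(K)-\dim x$ together with $\ex(Q^Ix)=\ex(I)-\dim x$, I obtain $\ex(Q^Kx)\leqslant\ex(Q^Ix)-2^{\rho(i_1)}$. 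Condition $1$ states $\ex(Q^Ix)<2^{\rho(i_1)}$, so $\ex(Q^Kx)<0$ for every surviving term.

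Finally, a class $Q^Kx$ of negative excess is zero: writing $K=(k_1,\ldots,k_s)$, negativity of the excess means $k_1<\dim(Q^{k_2}\cdots Q^{k_s}x)$, and the instability relation for Kudo--Araki operations forces $Q^{k_1}$ applied to a class of strictly larger dimension to vanish. Therefore every term of $N(Sq^a_*,Q^I)x$ is zero, whence $Sq^a_*Q^Ix=0$; since $a>0$ was arbitrary, $Q^Ix$ is $A$-annihilated. The genuine content of the argument is carried entirely by the excess estimate of the previous lemma; the only points requiring care are the bookkeeping between the Dyer--Lashof excess $\ex(K)$ and the excess $\ex(Q^Kx)$ of the evaluated class, and the fact that passing to admissible form does not disturb the excess bound, which is exactly what that lemma guarantees.
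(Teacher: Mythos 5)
Your proof is correct and follows essentially the same route as the paper's: both reduce $Sq^a_*Q^Ix$ to $N(Sq^a_*,Q^I)x$ using the Nishida relations together with the $A$-annihilatedness of $x$, and then apply the excess estimate of the preceding lemma together with condition $1$ to conclude that every surviving admissible term has negative excess and hence vanishes by instability. The only difference is that you make explicit the bookkeeping between $\ex(K)$ and $\ex(Q^Kx)$ and the vanishing of negative-excess classes, points the paper's proof leaves implicit.
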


\begin{proof}
Let $r>0$. Then we have the following
$$\begin{array}{llllll}
Sq^a_*Q^Ix & = & \sum Q^KSq^{a^K}_*x & = & \sum_{a^K=0} Q^Kx.
           \end{array}$$
But notice that according to Lemma 2.2
$$\ex(Q^Kx)\leqslant\ex(Q^Ix)-2^{\rho(i_1)}<0.$$
Hence the above sum is trivial, and we are done.
\end{proof}

This proves the Theorem 2 in one direction. Now we have to show that
the reverse direction holds as well. That is we have to show if
either of conditions (1)-(3) of Theorem 2 does not hold then $Q^Ix$
will be not-$A$-annihilated.

\begin{rmk}
Before proceeding, we recall a basic property of the function $\rho$
defined before Theorem 2 which is as following. Notice that given a
positive integer $n$, then $\rho(n)$ is the least integer $t$ such
that
$${n-2^t\choose 2^t}\equiv 1 \textrm{ mod }2.$$
Notice that if $n=\sum n_i2^i$ and $m=\sum m_i2^i$ are given with
$n_i,m_i\in\{0,1\}$ then ${n\choose m}=1$ mod $2$ if and only if
$n_i\geqslant m_i$ for all $i$. This makes it easy to verify the
above property for $\rho$.
\end{rmk}

The next three lemmata show that if any of conditions (1), (2) or
(3) doesn't hold, then $Q^Ix$ will not be $A$-annihilated.

\begin{lmm}
Let $X$ be path connected. Suppose $I=(i_1,\ldots,i_r)$ is an
admissible sequence, such that $\ex(Q^Ix)\geqslant 2^{\rho(i_1)}$.
Then such a class is not $A$-annihilated.
\end{lmm}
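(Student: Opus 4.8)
The plan is to prove the contrapositive of condition $(2)$ by producing a single Steenrod operation that does not annihilate $Q^Ix$. The operation is dictated by the hypothesis: writing $m=\rho(i_1)$, I would apply $Sq^{2^m}_*$. By the characterisation of $\rho$ in Remark 2.4 we have $\binom{i_1-2^m}{2^m}\equiv 1\pmod 2$, so the distinguished term obtained by absorbing all of $Sq^{2^m}_*$ into the leading Kudo--Araki operation is $Q^{K_0}x$ with $K_0=(i_1-2^m,i_2,\dots,i_r)$, occurring with coefficient $1$. The goal is to show this term survives in $Sq^{2^m}_*Q^Ix$.

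First I would make two reductions. Since the failure of conditions $(1)$ and $(3)$ is treated by the companion lemmata, I may assume while proving this statement that $x$ is $A$-annihilated and that $2i_{j+1}-i_j<2^{\rho(i_{j+1})}$ holds for every $j$; this introduces no circularity, because the claim to be proved, $\neg(2)\Rightarrow\neg A\text{-ann.}$, follows from $\neg(3)\Rightarrow\neg A\text{-ann.}$ together with $\bigl((3)\wedge\neg(2)\bigr)\Rightarrow\neg A\text{-ann.}$ With $x$ assumed $A$-annihilated, all Nishida contributions carrying $Sq^j_*x$ with $j>0$ vanish, so $Sq^{2^m}_*Q^Ix=N(Sq^{2^m}_*,Q^I)\,x$ and the entire computation takes place inside the Dyer--Lashof algebra via the operator $N$ of the recursion $(3)$ and $(4)$. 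I would then check that $K_0$ is admissible, which is immediate since $i_1\le 2i_2$ forces $i_1-2^m\le 2i_2$ while the tail $(i_2,\dots,i_r)$ is untouched, and that $Q^{K_0}x\neq 0$, which follows from $\ex(Q^{K_0}x)=\ex(Q^Ix)-2^m=\ex(Q^Ix)-2^{\rho(i_1)}\ge 0$, so that $Q^{K_0}x$ is a genuine (possibly square) nonzero monomial in $H_*QX$.

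The crux, and the step I expect to be the main obstacle, is to exclude cancellation of $Q^{K_0}x$ against the remaining admissible terms of $N(Sq^{2^m}_*,Q^I)$. Here I would invoke Lemma 2.2, which under condition $(3)$ bounds every admissible $Q^K$ appearing in $N(Sq^{2^m}_*,Q^I)$ by $\ex(K)\le\ex(I)-2^{\rho(i_1)}$. Thus $K_0$ realises the maximal admissible excess, and I would argue it is the unique surviving top-excess monomial: any competitor with $\ex(K)<\ex(K_0)$ has $\ex(Q^Kx)<\ex(Q^{K_0}x)$ and hence is either strictly lower or vanishes on $x$ once $\ex(K)-\dim x<0$, while equality $\ex(K)=\ex(K_0)$ forces the same leading index $i_1-2^m$. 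The delicate point is to show that the only production of the admissible monomial $K_0$ comes from the $j=0$ branch of the recursion $(3)$, so that its coefficient is exactly $1$ rather than an even number of coincidental contributions created by Adem rewriting of the $j>0$ branches; for this I would trace $K_0$ back through the recursion and apply Lemma 2.2 a second time to the tail $(i_2,\dots,i_r)$ to rule out such extra productions. Establishing this odd-coefficient uniqueness is exactly what certifies $Sq^{2^m}_*Q^Ix\neq 0$, completing the proof.
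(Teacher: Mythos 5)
Your central computation coincides with the paper's own proof of this lemma: the same operation $Sq^{2^{\rho(i_1)}}_*$, the same distinguished term $Q^{K_0}x$ with $K_0=(i_1-2^{\rho(i_1)},i_2,\ldots,i_r)$, unit coefficient via Remark 2.4, admissibility from $i_1\leqslant 2i_2$, and nonvanishing from $\ex(Q^{K_0}x)=\ex(Q^Ix)-2^{\rho(i_1)}\geqslant 0$ (a nonzero square in the boundary case). The first structural difference is your opening reduction. It is not circular, since Lemmas 2.6 and 2.8 are proved independently of this one, but it is absent from the paper and unnecessary: the paper's argument is unconditional in $x$ and in condition (3), and that unconditional form is what Subsection 2.1 later re-uses, since Lemma 2.10 needs to know that $Sq^{2^{\rho(i_1)}}_*$ detects an arbitrary class violating condition (2), with no hypothesis on the remaining conditions.

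The genuine gap is your last step, which you defer and whose proposed mechanism would fail. Since distinct admissible monomials $Q^Kx$ are linearly independent in $H_*QX$, competitors of smaller excess are irrelevant; the only issue is whether the Adem rewriting of the $t>0$ branches $Q^{i_1-2^{\rho(i_1)}+t}Sq^t_*Q^{i_2}\cdots Q^{i_r}x$ can contribute the monomial $Q^{K_0}x$ an odd number of times. Lemma 2.2 cannot settle this: it is a vanishing tool, giving only the upper bound $\ex(K)\leqslant\ex(I)-2^{\rho(i_1)}$, a bound which $K_0$ itself attains, and it carries no information about coefficients modulo $2$. Applying it a second time to the tail does not help either: a $t>0$ branch contributes terms $Q^{i_1-2^{\rho(i_1)}+t}Q^Jx$ with $\ex(J)\leqslant \ex(i_2,\ldots,i_r)-2^{\rho(i_2)}$, and since the admissible expansion of a non-admissible $Q^aQ^J$ consists of monomials of excess at most $\ex(J)$, the necessary condition for such a branch to reach $K_0$ is $\ex(K_0)\leqslant\ex(J)$, which unwinds to $2^{\rho(i_2)}-2^{\rho(i_1)}\leqslant 2i_2-i_1$; this is perfectly compatible with admissibility and with condition (3) (it holds automatically whenever $\rho(i_1)\geqslant\rho(i_2)$), so excess bookkeeping alone cannot certify that the coefficient of $Q^{K_0}$ stays odd. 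Note that the paper does not route this point through Lemma 2.2 at all: it disposes of the $t>0$ terms directly (if tersely), observing that they are supported on the lower-dimensional classes $Sq^t_*Q^{i_2}\cdots Q^{i_r}x$ and cannot cancel the leading term; a fully detailed justification is the content of Wellington's finer results on admissible expansions (Theorems 7.11 and 7.12 of his memoir, cited by the paper alongside Lemma 2.2), and that, rather than another application of Lemma 2.2, is what your plan would need in order to close.
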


\begin{proof}
This is quite straightforward. We may use $Sq^{2^\rho}_*$ with
$\rho=\rho(i_1)$, which gives
$$\begin{array}{lll}
Sq^{2^\rho}_*Q^Ix & = & Q^{i_1-2^\rho}Q^{i_2}\cdots Q^{i_r}x+O
\end{array}$$
where $O$ denotes other terms given by
$$O=\sum_{t>0}{i_1-2^\rho\choose
2^\rho-2t}Q^{i_1-2^\rho+t}Sq^t_*Q^{i_2}\cdots Q^{i_r}x.$$ Notice
that $\ex(Q^Ix)\geqslant 2^{\rho(i_1)}$ ensures that $i_1$ is not of
the form $2^\rho$. Looking at the binary expression implies that all
coefficients in $O$ are nontrivial, and $O$ will depend on the
action of $Sq^t_*$ on terms $Q^{i_2}\cdots Q^{i_r}x$. However, all
of these terms are terms of lower excess, and they will not cancel
the first term in in the right hand side of the above relation.\\
Notice that at the right hand side of the term
$Q^{i_1-2^\rho}Q^{i_2}\cdots Q^{i_r}x$ is obviously admissible.
Moreover,
$$\ex(Q^{i_1-2^\rho}Q^{i_2}\cdots Q^{i_r}x)=\ex(Q^Ix)-2^\rho\geqslant 0.$$
This proves that $Sq^{2^\rho}_*Q^Ix\neq 0$. Notice that if
$\ex(Sq^{2^\rho}_*Q^Ix)=0$, then
$$Sq^{2^\rho}_*Q^Ix  = (Q^{i_2}\cdots
Q^{i_r}x)^2\neq 0.$$ This completes the proof.
\end{proof}

The above lemma shows that if (2) of Theorem 2 does not hold, then
we will have a class which is not $A$-annihilated. Next, we move on
to the case when condition (3) does not hold.

\begin{lmm}
Let $X$ be path connected. Suppose $I=(i_1,\ldots,i_r)$ is an
admissible sequence, and let $Q^Ix$ be given with $\ex(Q^Ix)>0$ such
that $2i_{j+1}-i_j\geqslant 2^{\rho(i_{j+1})}$ for some $1\leqslant
j\leqslant r-1$. Then such a class is not $A$-annihilated.
\end{lmm}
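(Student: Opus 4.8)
The plan is to exhibit a single admissible operation $Sq^a_*$ that does not annihilate $Q^Ix$, by tracking one monomial of maximal excess through the Nishida relations and showing that it cannot be cancelled. Fix an index $j$ at which condition $(3)$ fails, set $\rho=\rho(i_{j+1})$, and record two consequences of the hypothesis. First, since $Q^Ix$ is a nonzero admissible monomial, every tail $Q^{i_l}\cdots Q^{i_r}x$ has non-negative excess; combining $\ex(Q^{i_j}\cdots Q^{i_r}x)\geqslant 0$ with $2i_{j+1}-i_j\geqslant 2^{\rho}$ gives $\ex(Q^{i_{j+1}}\cdots Q^{i_r}x)\geqslant 2i_{j+1}-i_j\geqslant 2^{\rho}$, which is exactly the hypothesis of the previous lemma applied to the tail. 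Second, by the characterisation of $\rho$ recalled in the Remark above, $\binom{i_{j+1}-2^{\rho}}{2^{\rho}}\equiv 1 \bmod 2$.

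I would then apply $Sq^{a}_*$ with $a=2^{\rho+j}$ and follow the unique \emph{maximal-excess cascade} in the iterated Nishida relations: at each position $p\leqslant j$ pass exactly half of the incoming operation to the next factor, so that the relevant binomial coefficient is $\binom{\,\cdot\,}{0}=1$, thereby arriving at position $j+1$ with the operation $Sq^{2^{\rho}}_*$; there take the $Sq^0_*$-term, whose coefficient $\binom{i_{j+1}-2^{\rho}}{2^{\rho}}$ is odd, and pass the identity onwards. This produces the admissible monomial $M=Q^{k_1}\cdots Q^{k_r}x$ with $k_p=i_p-2^{\rho+j-p}$ for $p\leqslant j$, $k_{j+1}=i_{j+1}-2^{\rho}$, and $k_l=i_l$ for $l\geqslant j+2$, occurring with coefficient $1$.

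Next I would check that $M$ is a genuine generator. For admissibility, all interfaces except $(j,j+1)$ inherit the original relations $i_p\leqslant 2i_{p+1}$ (we only decrease exponents, and at the interfaces $p<j$ by equal amounts on both sides), while the interface $(j,j+1)$ reads $i_j-2^{\rho}\leqslant 2i_{j+1}-2^{\rho+1}$, i.e. precisely the failure $2i_{j+1}-i_j\geqslant 2^{\rho}$ of condition $(3)$. For non-vanishing, a short computation shows that the intermediate excesses of $M$ equal the original ones at positions $p\leqslant j$ and equal $\ex(Q^{i_{j+1}}\cdots Q^{i_r}x)-2^{\rho}\geqslant 0$ at position $j+1$, so every $Q^{k_l}\cdots Q^{k_r}x$ is nonzero and $M$ has positive excess $\ex(Q^Ix)$, hence is indecomposable.

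The main obstacle, and the heart of the argument, is ruling out cancellation of $M$. Here I would use the degree bookkeeping $\ex(Q^Kx)=2k_1-|K|-\dim x$ with $|K|=|I|-a$ fixed: every term in $Sq^a_*Q^Ix$ has excess at most $\ex(Q^Ix)$, with equality forcing the leading exponent $k_1=i_1-2^{\rho+j-1}$. Since an Adem relation rewriting a non-admissible $Q^{k_1}Q^{k_2}\cdots$ strictly lowers $k_1$, no non-admissible term can contribute to the top excess once put in admissible form; thus the maximal-excess part of $Sq^a_*Q^Ix$ is a sum of \emph{distinct} admissible monomials, which cannot cancel one another. Finally, the chain of intermediate operations delivering a prescribed exponent sequence is unique, so the coefficient of $M$ is the single product of binomials computed above, namely $1$. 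Hence $Sq^{2^{\rho+j}}_*Q^Ix\neq 0$ and $Q^Ix$ is not $A$-annihilated. No hypothesis on $x$ is required: the cascade applies $Sq^0_*$ to $x$, and any term involving $Sq^s_*x$ with $s>0$ lies on a different polynomial generator and so cannot equal $M$.
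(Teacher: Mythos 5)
Your main line coincides exactly with the paper's proof: the paper likewise rewrites the failed inequality as the admissibility condition for the pair $(i_j-2^{\rho},i_{j+1}-2^{\rho})$, applies the same operation $Sq^{2^{\rho+j}}_*$, and exhibits the same leading admissible monomial $M=Q^{i_1-2^{\rho+j-1}}Q^{i_2-2^{\rho+j-2}}\cdots Q^{i_j-2^{\rho}}Q^{i_{j+1}-2^{\rho}}Q^{i_{j+2}}\cdots Q^{i_r}x$ of excess $\ex(Q^Ix)>0$. Your explicit check of the intermediate excesses of $M$ (hence that $M$ is a genuine nonzero generator) and of the odd coefficient $\binom{i_{j+1}-2^{\rho}}{2^{\rho}}$ via the defining property of $\rho$ are details the paper leaves implicit, and they are correct.

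The cancellation step, however --- which you rightly call the heart of the argument --- rests on a false claim. An Adem relation strictly lowers the leading exponent only when the \emph{leading} pair is inadmissible: if $Q^{k_1}Q^{k_2}\cdots$ is inadmissible at an interface $(l,l+1)$ with $l\geqslant 2$, rewriting there leaves $k_1$, and hence the excess $2k_1-\deg$, unchanged. Consequently the maximal-excess part of $Sq^{2^{\rho+j}}_*Q^Ix$ is \emph{not} a sum of admissible monomials: it is $Q^{i_1-2^{\rho+j-1}}$ prefixed to \emph{every} monomial of $Sq^{2^{\rho+j-1}}_*(Q^{i_2}\cdots Q^{i_r}x)$, admissible or not, and a term with inadmissible tail can in principle rewrite onto $M$. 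One can try to repair this by induction on length (peel off the first exponent and pass to the tail), but the induction changes character at position $j+1$: there the Nishida relation produces terms $\binom{i_{j+1}-2^{\rho}}{2^{\rho}-2t}Q^{i_{j+1}-2^{\rho}+t}Sq^t_*(Q^{i_{j+2}}\cdots Q^{i_r}x)$ with $t>0$, whose leading exponent --- hence excess --- is \emph{larger} than that of the target monomial; since Adem rewriting only lowers exponents, excess bookkeeping alone cannot exclude these from contributing to the coefficient of $M$, and one is forced to examine the actual Adem coefficients that arise. To be fair, the paper's own proof dismisses the same terms with the equally imprecise remark that the remainder $O$ is ``a sum of terms of lower excess,'' so your write-up is no less rigorous than the printed one; but as a self-contained argument the no-cancellation step does not hold as stated, and this is precisely the point where the full combinatorics of Curtis and Wellington (or the details in the thesis the paper cites) are needed.
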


\begin{proof}
Assume that $Q^Ix$ satisfies the condition above. We may write this
condition as
$$i_j-2^\rho\leqslant 2i_{j+1}-2^{\rho+1}=2(i_{j+1}-2^\rho),$$
where $\rho=\rho(i_{j+1})$. This is the same as admissibility
condition for the pair $(i_j-2^\rho,i_{j+1}-2^\rho)$. In this case
we use $Sq^{2^{\rho+j}}_*$ where we get
$$\begin{array}{lll}
Sq^{2^{\rho+j}}_*Q^Ix&=&Q^{i_1-2^{\rho+j-1}}Q^{i_2-2^{\rho+j-2}}\cdots
Q^{i_j-2^\rho}Q^{i_{j+1}-2^\rho}Q^{i_{j+2}}\cdots Q^{i_r}x+O
\end{array}$$
where $O$ denotes other terms, and similar to previous lemma will be
a sum of terms of lower excess. The first term in right hand side of
the of the above equality is admissible. Moreover,
$$\begin{array}{lll}
\ex(Sq^{2^{\rho+j}}_*Q^Ix) & = & (i_1-2^{\rho+j-1})-(i_2-2^{\rho+j-2})-(i_j-2^\rho)-(i_{j+1}-2^\rho)-\\
                           &   & (i_{j+2}+\cdots+i_r+\dim x)\\
                           & = & i_1-(i_2+\cdots+i_r+\dim x)\\
                           & = & \ex(Q^Ix)>0,
\end{array}$$
where by abuse of notation we have written
$\ex(Sq^{2^{\rho+j}}_*Q^Ix)$ to denote the excess of the first term
in the above equality. This implies that
$$Sq^{2^{\rho+j}}_*Q^Ix\neq 0,$$
and hence completes the proof.
\end{proof}

\begin{rmk}
According to the proof in this case we always have
$$\ex(Sq^{2^{\rho+j}}_*Q^Ix)>0$$
which means that we always end up with an indecomposable term after
applying the ``right'' operation, i.e. the outcome will not be a
square. This little observation will be useful.
\end{rmk}

Now we show that the condition (1) is also necessary in the proof of
the main theorem.

\begin{lmm}
Let $X$ be path connected, and $x\in \overline{H}_*X$ be not
$A$-annihilated. Then $Q^Ix$ is not $A$-annihilated.
\end{lmm}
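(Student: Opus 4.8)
The plan is to exhibit a single lower Steenrod operation that does not annihilate $Q^Ix$. Since $x$ is not $A$-annihilated, set $c=\max\{s:Sq^s_*x\neq 0\}$, so that $c>0$, and let $r=l(I)$. I would apply $Sq^{2^rc}_*$ and iterate the Nishida relation $(1)$, using that it respects length, to write
$$Sq^{2^rc}_*Q^Ix=\sum(\text{coeff})\,Q^{j_1}\cdots Q^{j_r}Sq^s_*x,$$
summed over the Nishida ``chains'' $(t_0,\dots,t_r)$ with $t_0=2^rc$, $t_r=s$, and $j_k=i_k-t_{k-1}+t_k$. The distinguished chain to track is the ``all-top'' one, $t_{k-1}=2t_k$ at every stage (so $t_k=2^{r-k}c$), for which each binomial coefficient is $\binom{i_k-t_{k-1}}{0}=1$ and the innermost Nishida step emits exactly $Sq^c_*x$. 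This chain contributes the leading term $Q^{j_1^{\mathrm{top}}}\cdots Q^{j_r^{\mathrm{top}}}Sq^c_*x$ with $j_k^{\mathrm{top}}=i_k-2^{r-k}c$.

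First I would verify that this leading term is a genuine, nonzero, admissible class. A direct check shows that $j_k^{\mathrm{top}}\le 2j_{k+1}^{\mathrm{top}}$ is equivalent to $i_k\le 2i_{k+1}$, so admissibility of $I$ passes to the leading term; and $j_r^{\mathrm{top}}=i_r-c\ge \dim x-c=\dim Sq^c_*x$, since $i_r\ge\dim x$ because $Q^Ix\neq0$. A telescoping computation gives
$$\ex\big(Q^{j_1^{\mathrm{top}}}\cdots Q^{j_r^{\mathrm{top}}}Sq^c_*x\big)=\ex(Q^Ix)\ge 0,$$
so the leading term is nonzero (a polynomial generator when $\ex(Q^Ix)>0$, a square when $\ex(Q^Ix)=0$), and since $Sq^c_*x\neq0$ at least one basis seed survives.

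The heart of the argument, and the step I expect to be the main obstacle, is non-cancellation: that the leading term is not killed by the remaining terms. I would separate the sum by the inner Steenrod degree $s$. Because the Nishida and Adem reductions act through the left action of the Dyer--Lashof algebra and fix the module generator in $\overline{H}_*X$ to which the operations are applied, every monomial arising from a chain with inner degree $s$ has its $\overline{H}_*X$-seed in degree $\dim x-s$; hence terms with $s\neq c$ lie in a different seed-degree and cannot cancel the leading term. Within the block $s=c$ I would use excess: the naive excess of a chain telescopes to $\ex(Q^Ix)-2^rc+2t_1$, which is $\le\ex(Q^Ix)$ with equality only when $t_1=2^{r-1}c$, while Adem reduction of an inadmissible monomial strictly lowers excess (a standard property of the Dyer--Lashof algebra, parallel to the estimate in Lemma 2.2). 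Consequently the all-top chain is the \emph{unique} source of an admissible monomial of excess exactly $\ex(Q^Ix)$ in the block $s=c$: any other chain there is either of strictly smaller naive excess, or has $t_1=2^{r-1}c$ but an inadmissible $Q$-part that Adem-reduces to strictly smaller excess. Therefore $Q^{j_1^{\mathrm{top}}}\cdots Q^{j_r^{\mathrm{top}}}Sq^c_*x$ occurs with coefficient exactly $1$, so $Sq^{2^rc}_*Q^Ix\neq0$ and $Q^Ix$ is not $A$-annihilated.

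The remaining points are routine: that $2^rc$ and each intermediate $t_{k-1}=2^{r-k+1}c$ are even, so the leading binomials really are $\binom{\cdot}{0}=1$; and that the seed-degree bookkeeping is exactly the statement that $H_*QX$ is polynomial on the $Q^Jx_\beta$ with the seed $x_\beta$ untouched by the action of $R$. The only genuinely delicate ingredient is the strict drop of excess under Adem reduction, which is what isolates the leading monomial; everything else is a direct computation with the Nishida relations and excess estimates.
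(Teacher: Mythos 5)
Your proof is correct and follows essentially the same route as the paper's: the paper applies $Sq^{2^{t+r}}_*$ where $t$ is minimal with $Sq^{2^t}_*x\neq 0$ (you take the maximal non-annihilating degree $c$ and apply $Sq^{2^rc}_*$, an immaterial variation), isolates the same ``all-top'' Nishida chain with coefficient one and excess $\ex(Q^Ix)$, and rules out cancellation exactly as you do, by observing that every other chain produces a seed of strictly larger dimension. The only substantive difference is that your excess/Adem argument inside the block $s=c$ is superfluous: since each Nishida step forces $t_k\leqslant t_{k-1}/2$, the all-top chain is the unique chain with $t_r=c$, so that block contains no other terms to cancel against.
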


\begin{proof}
Let $t$ be the least number that
$$Sq^{2^t}_*x\neq 0.$$
If $I=(i_1,\ldots,i_r)$, we apply $Sq^{2^{t+r}}_*$ to $Q^Ix$, where
we get
$$\begin{array}{lll}
Sq^{2^{r+t}}_*Q^Ix & = & Q^{i_1-2^{r+t-1}}\cdots
Q^{i_r-2^t}Sq^{2^t}_*x+O,\end{array}$$ where $O$ denotes sum of
other terms which are of the form $Q^Jy$ with $\dim y>\dim
Sq^{2^t}_*x$. This means that the first term in the above equality
will not cancel with any of other terms.\\
By abuse of notation we write $\ex(Q^{i_1-2^{r+t-1}}\cdots
Q^{i_r-2^t}Sq^{2^t}_*x)$ to denote the excess of the first term in
the above equality. We have $\ex(Q^{i_1-2^{r+t-1}}\cdots
Q^{i_r-2^t}Sq^{2^t}_*x)=\ex(Q^Ix)>0$. Moreover,
$$Q^{i_1-2^{r+t-1}}\cdots Q^{i_r-2^t}$$
is admissible. Hence $Sq^{2^{t+r}}_*Q^Ix\neq 0$. Note that similar
to the previous lemma we end up with an indecomposable term.
\end{proof}

This completes the final step in the proof of Theorem 1. Our next
task is to prove Corollary $3$, which is very important for us. We
recall that according to Corollary $3$ if we have an $A$-annihilated
sum of terms of the form $Q^Ix$ with $\ex(Q^Ix)>0$, then each of
these terms must be $A$-annihilated.

\subsection{Separating not-$A$-annihilated classes}
Here we give a sketch of the proof for Corollary 3, and refer the
reader to \cite[Subsection 3.2]{100} for more details. We show that
if $Q^Ix$ and $Q^Jx$ are two terms which are not $A$-annihilated,
both of positive excess, then their sum is not $A$-annihilated. This
then will prove Corollary $3$, as well as the general claim Note 4.\\

Theorem 1 provides us with a complete description of $A$-annihilated
classes in $H_*QX$ of the form $Q^Ix$ of positive excess, with $X$
being path connected. In return, this also classifies all such
classes that are not $A$-annihilated. For instance given a class
$Q^Ix\in H_*QX$, this class will not be $A$-annihilated, if at least
one of conditions in Theorem 2 does not hold; i.e.
\begin{itemize}
\item $x$ is not $A$-annihilated,
\item $\ex(Q^Ix)\geqslant 2^{\rho(i_1)}$;
\item There exists $j$ such that $2i_{j+1}-i_j\geqslant 2^{\rho(i_{j+1})}$.
\end{itemize}
Therefore, proving Corollary $3$ comes down to analyse these cases
and showing that if we have two terms $Q^Ix$ and $Q^Jx$ with
$l(I)=l(J)$, both of positive excess, and each being
not-$A$-annihilated for one of the above reasons the $Q^Ix+Q^Jx$ is
not $A$-annihilated as well. It is obvious, that if the two terms
are not $A$-annihilated for the same reason, then their sum is not
$A$-annihilated. For instance we have the following example.

\begin{lmm}
Suppose $Q^Ix, Q^Jx\in H_*QX$ are given, $l(I)=l(J)=r$, with
$x\in\overline{H}_*X$ not being $A$-annihilated. Then $Q^Ix+Q^Jx$ is
not $A$-annihilated.
\end{lmm}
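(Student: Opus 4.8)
The plan is to recycle the single-term argument of Lemma 2.8, exploiting the fact that the operation detecting non-$A$-annihilation of $Q^Ix$ and of $Q^Jx$ is \emph{the same} and depends only on $x$. Since $x$ is not $A$-annihilated, let $t$ be the least integer with $Sq^{2^t}_*x\neq 0$; this $t$ is determined by $x$ alone, and in particular is common to both summands. I would then apply $Sq^{2^{r+t}}_*$ to the sum and read off, exactly as in the proof of Lemma 2.8, the leading contribution coming from each term.

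Carrying this out, the proof of Lemma 2.8 gives
$$Sq^{2^{r+t}}_*Q^Ix=Q^{i_1-2^{r+t-1}}\cdots Q^{i_r-2^t}Sq^{2^t}_*x+O_I,\qquad Sq^{2^{r+t}}_*Q^Jx=Q^{j_1-2^{r+t-1}}\cdots Q^{j_r-2^t}Sq^{2^t}_*x+O_J,$$
where the exponent sequences $K_I=(i_1-2^{r+t-1},\ldots,i_r-2^t)$ and $K_J=(j_1-2^{r+t-1},\ldots,j_r-2^t)$ are admissible, and where every monomial occurring in $O_I$ or $O_J$ has inner class $Sq^s_*x$ of dimension strictly greater than $\dim Sq^{2^t}_*x$ (equivalently $s<2^t$, which is forced because the iterated Nishida relation lets at most $2^t$ of the budget $2^{r+t}$ reach $x$). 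Adding the two identities yields
$$Sq^{2^{r+t}}_*(Q^Ix+Q^Jx)=\bigl(Q^{K_I}+Q^{K_J}\bigr)Sq^{2^t}_*x+(O_I+O_J).$$

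The crux is to see that the bracketed leading term is nonzero and is not cancelled by $O_I+O_J$. The latter is immediate: every term in $O_I+O_J$ carries an inner class of strictly larger dimension than $Sq^{2^t}_*x$, so no cancellation between the two groups is possible. For the former, note that the two summands are genuinely distinct, i.e. $I\neq J$ (otherwise $Q^Ix+Q^Jx=0$ already, mod $2$), and since $K_I$ and $K_J$ arise from $I$ and $J$ by the \emph{same} coordinatewise shift, $I\neq J$ forces $K_I\neq K_J$. Writing $Sq^{2^t}_*x=\sum_\alpha c_\alpha x_\alpha$ in the additive basis $\{x_\alpha\}$ of $\overline{H}_*X$, with some $c_{\alpha_0}\neq 0$, the admissible monomial $Q^{K_I}x_{\alpha_0}$ appears in $Q^{K_I}Sq^{2^t}_*x$ with coefficient $c_{\alpha_0}\neq 0$; since distinct admissible monomials are linearly independent in the polynomial algebra $H_*QX$ and $K_I\neq K_J$, this monomial cannot be produced by $Q^{K_J}Sq^{2^t}_*x$. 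Hence it survives in the total expression, so $Sq^{2^{r+t}}_*(Q^Ix+Q^Jx)\neq 0$ and the sum is not $A$-annihilated.

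The main obstacle is precisely this last non-cancellation step: one must be careful that $Sq^{2^t}_*x$ may itself be a sum of several basis classes, and that a monomial $Q^{K_I}x_{\alpha_0}$ of excess $0$ is really a square yet is still a legitimate, linearly independent element of the polynomial basis. Once the linear independence of distinct admissible monomials in $H_*QX$ is invoked, together with $K_I\neq K_J$, the argument closes. I would also record explicitly that the whole method hinges on $t$ being intrinsic to $x$: it is this that makes the \emph{same} operation $Sq^{2^{r+t}}_*$ detect both summands and produce leading terms sharing the common inner class $Sq^{2^t}_*x$, which is exactly what allows the differing outer sequences $K_I$ and $K_J$ to prevent cancellation.
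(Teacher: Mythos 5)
Your proof is correct and takes essentially the same route as the paper: the paper's own (very terse) proof likewise applies the operation $Sq^{2^{t+r}}_*$ from Lemma 2.8, with $t$ determined by $x$ alone, and simply ``observes'' that $Sq^{2^{t+r}}_*Q^Ix\neq Sq^{2^{t+r}}_*Q^Jx$. Your write-up supplies the details the paper leaves implicit, namely that $I\neq J$ forces the distinct admissible leading sequences $K_I\neq K_J$, and that the remainder terms cannot interfere because their inner classes lie in a different dimension.
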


\begin{proof}
We need to find an integer $k$ such that $Sq^k_*Q^Ix\neq
Sq^k_*Q^Jx$. We do the same as we did in the proof of Lemma 2.8. Let
$t$ be the least number such that $Sq^{2^t}_*x\neq 0$. We then
observe that
$$Sq^{2^{t+r}}_*Q^Ix\neq Sq^{2^{t+r}}_*Q^Jx.$$
This completes the proof.
\end{proof}
As another example consider the following case.
\begin{lmm}
Let $Q^Ix$ and $Q^Jx$ be two classes that are not $A$-annihilated
such that $\ex(Q^Ix)\geqslant 2^{\rho(i_1)}$, and
$\ex(Q^Jx)\geqslant 2^{\rho(j_1)}$. Then there exists $k$ such that
$$Sq^k_*Q^I\neq Sq^k_*Q^Jx.$$
\end{lmm}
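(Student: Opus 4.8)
The plan is to exhibit a single operation $Sq^k_*$ with $Sq^k_*Q^Ix\neq Sq^k_*Q^Jx$; since this is the same as $Sq^k_*(Q^Ix+Q^Jx)\neq 0$, it shows the sum is not $A$-annihilated. First I would clear away the trivial case $\dim Q^Ix\neq\dim Q^Jx$: as $Q^Ix$ is not $A$-annihilated there is some $k>0$ with $Sq^k_*Q^Ix\neq 0$, and then $Sq^k_*Q^Ix$ and $Sq^k_*Q^Jx$ lie in different homological degrees, so they cannot be equal. Hence I may assume $\dim Q^Ix=\dim Q^Jx=:N$ and $I\neq J$. Writing the excess as $\ex(Q^Ix)=2i_1-N$ and $\ex(Q^Jx)=2j_1-N$, the two excesses have the same parity, and $\ex(Q^Ix)=\ex(Q^Jx)$ holds precisely when $i_1=j_1$.

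The tool is the computation already made in the proof of Lemma 2.5: for an admissible $Q^Kx$ with $\ex(Q^Kx)\geqslant 2^{\rho(k_1)}$ one has
$$Sq^{2^{\rho(k_1)}}_*Q^Kx=Q^{k_1-2^{\rho(k_1)}}Q^{k_2}\cdots Q^{k_s}x+O,$$
where the displayed term is admissible, lies in excess $\ex(Q^Kx)-2^{\rho(k_1)}$, carries coefficient $\binom{k_1-2^{\rho(k_1)}}{2^{\rho(k_1)}}\equiv 1$ by Remark 2.4, and every term of $O$ has strictly larger excess. More generally, iterating the Nishida relation shows that every term of $Sq^a_*Q^Kx$ has excess at least $\ex(Q^Kx)-a$. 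I would run the argument in the excess grading: among admissible monomials of the fixed degree $N-k$ the excess is a well-defined invariant and monomials of distinct excess are linearly independent, so the lowest-excess part of a sum is unambiguous and is nonzero as soon as a single lowest-excess monomial survives.

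Now split on the excess. If $\ex(Q^Ix)>\ex(Q^Jx)$ (after relabelling, the generic case), I apply the operation $k=2^{\rho(j_1)}$ attached to the term of smaller excess. The leading term $L_J=Q^{j_1-2^{\rho(j_1)}}Q^{j_2}\cdots Q^{j_r}x$ of $Sq^{2^{\rho(j_1)}}_*Q^Jx$ occupies excess $\ex(Q^Jx)-2^{\rho(j_1)}$, whereas every term of $Sq^{2^{\rho(j_1)}}_*Q^Ix$ has excess at least $\ex(Q^Ix)-2^{\rho(j_1)}>\ex(Q^Jx)-2^{\rho(j_1)}$; thus $L_J$ is the unique lowest-excess monomial of $Sq^{2^{\rho(j_1)}}_*(Q^Ix+Q^Jx)$ and survives, giving a nonzero value. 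In the remaining case $\ex(Q^Ix)=\ex(Q^Jx)$ we have $i_1=j_1$ and hence $\rho(i_1)=\rho(j_1)=:\rho$; applying $Sq^{2^\rho}_*$, the lowest-excess part of $Sq^{2^\rho}_*(Q^Ix+Q^Jx)$ equals $L_I+L_J$, a sum of two admissible monomials with the common first index $i_1-2^\rho$ but distinct tails (since $I\neq J$), hence two distinct basis elements whose sum is again nonzero. Taking $k=2^{\rho(j_1)}$ or $k=2^\rho$ respectively yields the desired operation.

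The step I expect to be the main obstacle is, as in Lemma 2.5, the no-cancellation claim: one must ensure that the chosen lowest-excess admissible monomial is not reproduced when the higher-excess terms $O$ are rewritten in admissible form by the Adem relations. This is exactly the excess bookkeeping underlying the condition (2) computation in Lemma 2.5, and I would verify it in the same way, deferring the detailed account to \cite{100}.
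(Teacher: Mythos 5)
Your argument is correct at the same level of rigour as the paper's own, but it makes a genuinely different choice of the distinguishing operation, so a comparison is worthwhile. The paper's entire proof is one line: take $\rho=\min\{\rho(i_1),\rho(j_1)\}$ and assert that $Sq^{2^\rho}_*Q^Ix\neq Sq^{2^\rho}_*Q^Jx$ ``is clear''. The point of that choice is Remark 2.4: the leading coefficient $\binom{i_1-2^\rho}{2^\rho}$ of the class realising the minimum equals $1$, while if $\rho(i_1)\neq\rho(j_1)$ the other leading coefficient is $0$; if the two $\rho$'s agree, both leading terms survive and are distinct admissible monomials. You instead normalise by dimension, then split on excess, and attach the operation to the class of \emph{smaller} excess, $k=2^{\rho(j_1)}$. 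What your choice buys is a cleaner separation: every term arising from $Q^Ix$, and every error term of $Q^Jx$, has formal excess strictly above $\ex(Q^Jx)-2^{\rho(j_1)}$, so $L_J$ is the unique candidate in the bottom excess; with the paper's choice (say $\rho(i_1)<\rho(j_1)$ but $\ex(Q^Ix)>\ex(Q^Jx)$) the error terms of the $J$-class can sit at or below the excess of the surviving leading term $L_I$, and the bookkeeping is murkier. In the equal-excess case your observation that equal dimension plus equal excess forces $i_1=j_1$, hence $\rho(i_1)=\rho(j_1)$, makes your operation coincide with the paper's. One caution, applying equally to both proofs: the step you flag as the main obstacle really is the whole content, and it is exactly what the paper leaves unproved (here ``it is clear'', in Lemma 2.5 ``they will not cancel''). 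Since an Adem move in the first position lowers excess by an even amount and moves elsewhere preserve it, admissible rewriting can only push excess \emph{down}; hence ``the unique lowest-excess monomial survives'' is not a purely grading-theoretic fact, and one must show that the specific monomial $L_J$ is never reproduced by rewriting the higher terms. Both your proposal and the paper defer precisely this point to \cite{100}, so your account is, if anything, the more honest about where the real work lies.
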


\begin{proof}
If we choose $\rho=\min\{\rho(i_1),\rho(j_1)\}$, then it is clear
that
$$Sq^{2^\rho}_*Q^Ix\neq Sq^{2^\rho}_*Q^Jx.$$
\end{proof}

The key ingredient is that if $Sq^k_*Q^Ix\neq 0$ with $k$ being
least such number, and $Sq^{k'}_*Q^Jx\neq 0$ with $k'$ being least
such number. We then may use $Sq^{\min{k,k'}}_*$ to show that
$Q^Ix+Q^Jx$ is not $A$-annihilated. We refer the reader to
\cite[Subsection 3.2]{100} for more details.

Finally, we give an example showing that the conditions
$\ex(Q^I)>0$, $\ex(Q^J)>0$ are necessary. The following example is
due to Wellinton \cite[Remark 11.26]{15}.

\begin{exm}
One may check that the following class is an $A$-annihilated element
in the Dyer-Lashof algebra $R$,
$$(Q^{2062}Q^{1031}Q^{519}Q^{263}Q^{135}Q^{71}Q^{39})^2+Q^{4120}Q^{2062}Q^{1031}Q^{519}Q^{263}Q^{135}Q^{71}Q^{39}$$
Notice that according to Theorem 1
$Q^{1031}Q^{519}Q^{263}Q^{135}Q^{71}Q^{39}$ which is involved in
both terms is $A$-annihilated. This then makes it easy to see that
the above sum is $A$-annihilated, as there are a few operations
available at this stage which may map the above sum nontrivially.
However, each term is not $A$-annihilated under the action of
$Sq^2_*$. Notice that the first term is a square, and of trivial
excess, and so does not satisfy conditions of Corollary 3.
\end{exm}

Lemma 5 is just a special case of Corollary 3, and Corollary 6 is
implied by Corollary 3 and Lemma 5. Recall that according to
Corollary 6 if $\xi_n=\sum Q^Ig_n\in H_*QS^n$ is spherical, then
every term in the above sum is $A$-annihilated. This is the
statement of Lemma 5 if $\xi_n$ is odd dimensional. Let $n>1$, and
let $\xi_n$ be even dimensional. As $\xi_n$ is spherical, then it
pulls back to an odd dimensional spherical class $\xi_{n-1}\in
H_*QS^{n-1}$. According to Lemma 5
$$\xi_{n-1}=\sum Q^Ig_{n-1}$$
with each $Q^Ig_{n-1}$ being $A$-annihilated. Applying suspension to
the above class we obtain,
$$\xi_n=\sum Q^Ig_n$$
with each $Q^Ig_n$ being $A$-annihilated. This completes the proof
of Corollary 6.

\section{Proof of The Main Theorem}
Our main theorem is a combination of Corollary 3, and facts implies
by it in Remark 4, together with a comparison between different
bases for the submodule of primitives in $H_*Q_0S^0$. As stated, our
theorem result, couples this results with the behavior of the
potential spherical classes $\theta\in H_*Q_0S^0$ under the homology
suspension
$$\sigma_*:H_*Q_0S^0\to H_{*+1}QS^1.$$
If a spherical class does survive under the suspension homomorphism,
then our main theorem reduces to a form of Remark 4. But if our
spherical class $\theta\in H_*Q_0S^0$ dies under the homology
suspension, it then tells us that $\theta$ is a decomposable
primitive, i.e. a square. This then implies that
$\theta=\zeta^{2^t}$ for some $t>0$. We shall show that it is not
possible to have a spherical class $\theta=\zeta^{2^t}\in H_*Q_0S^0$
with $t>1$. This will be achieved by a preparation on the behavior
of the $S^1$-transfer, together with some observations on
$H_*Q_0S^{-1}$, $H_*Q_0S^{-2}$ and $H_*Q\Sigma^{-1}\C P_+$. Finally,
we note that it is in general true that if $\theta=\zeta^{2^t}\in
H_*QS^n$ is spherical with $n>1$, then $t<2$. The proof of this fact
is a simplified version of its proof for the case when $n=0$ where
one will need to desuspend only once and apply $Sq^1_*$. We leave
the details to reader. A proof of this is to be found in \cite[Page
60]{100}.

\subsection{The homology rings $H_*Q_0S^{-1}$, $H_*Q\Sigma^{-1}\C P$}
We start by recalling some fact about the Eilenberg-MacLane spectral
sequence. This spectral sequence is one of the main tools in
calculating the homology of loop spaces is the Eilenberg-Moore
spectral sequence. We recall the following \cite[Proposition
7.3]{5}.

\begin{prp}
Let $X$ be simply connected, with $H^*X$ polynomial. Then $H_*\Omega
X$ is an exterior algebra, and the suspension
$$\sigma_*:QH_*\Omega X\to PH_*X$$
is an isomorphism, and the Eilenberg-Moore spectral sequence
$$E^2=\mathrm{Cotor}^{H_*X}(\Z/2,\Z/2)\Rightarrow H_*\Omega X$$
collapses. In particular,
$$H_*\Omega X\simeq E_{\Z/2}(\sigma^{-1}_*PH_*X),$$
where $E_{\Z/2}(\sigma^{-1}_*PH_*X)$ denotes the exterior algebra
over $\Z/2$ generated by $\sigma^{-1}_*PH_*X$
\end{prp}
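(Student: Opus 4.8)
The plan is to run the Eilenberg-Moore spectral sequence of the path-loop fibration $\Omega X\to PX\to X$. Because $PX$ is contractible and $X$ is simply connected the spectral sequence converges strongly, with
$$E^2=\mathrm{Cotor}^{H_*X}(\Z/2,\Z/2)\Rightarrow H_*\Omega X,$$
the simple connectivity ensuring that the decreasing filtration is complete and exhaustive in each total degree. First I would identify $E^2$. Dualizing, the hypothesis that $H^*X$ is polynomial lets me compute $\mathrm{Tor}^{H^*X}(\Z/2,\Z/2)$ and then dualize the answer: writing $H^*X=\Z/2[x_\alpha]$ and tensoring the two-term Koszul resolutions of $\Z/2$ over the monogenic factors $\Z/2[x_\alpha]$, the K\"unneth isomorphism for $\mathrm{Tor}$ over a tensor product of algebras gives $\mathrm{Tor}^{H^*X}(\Z/2,\Z/2)=E_{\Z/2}(\sigma x_\alpha)$, an exterior algebra with one generator $\sigma x_\alpha$ in homological filtration $1$ and internal degree $|x_\alpha|$. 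Dualizing back, $E^2=E_{\Z/2}(\sigma^{-1}PH_*X)$, the exterior generators being precisely the filtration-$1$ classes, since $\mathrm{Cotor}^1_{H_*X}(\Z/2,\Z/2)=PH_*X$ is the module of primitives.

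Next I would prove collapse by a multiplicativity argument. Since every differential $d_r$ ($r\geqslant 2$) strictly lowers the homological filtration, a class lying in filtration $1$ can support no nonzero differential. Hence the exterior generators are $d_r$-cycles for every $r$, and as $E^2$ is generated as an algebra by these filtration-$1$ classes and each $d_r$ is a derivation, an induction on $r$ forces $d_r=0$ on all of $E^r$; thus $E^2=E^\infty$. The edge homomorphism of the spectral sequence along the $1$-line is exactly the homology suspension $\sigma_*\colon H_*\Omega X\to PH_*X$, and the collapse identifies it with the projection of $H_*\Omega X$ onto $E^\infty_{1,*}=PH_*X$; together with the fact that the filtration-$1$ part is precisely the indecomposable quotient, this yields the asserted isomorphism $\sigma_*\colon QH_*\Omega X\xrightarrow{\ \cong\ }PH_*X$.

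The hard part will be passing from $E^\infty$ to $H_*\Omega X$ itself, that is, ruling out multiplicative extensions, which over $\Z/2$ is genuinely delicate: an exterior generator $y=\sigma^{-1}\xi_\alpha$ satisfies $y^2=0$ in $E^\infty$, yet a priori $y^2$ could be a nonzero element of strictly higher filtration in $H_*\Omega X$. To settle this I would appeal to the Borel structure theorem for connected commutative Hopf algebras over $\Z/2$: $H_*\Omega X$ decomposes as a tensor product of monogenic factors $\Z/2[y_\beta]/(y_\beta^{2^{s_\beta}})$, and the suspension isomorphism just established forces the number of algebra generators to equal $\dim_{\Z/2}PH_*X$, the number of exterior generators of $E^\infty$. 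Since $E^\infty$ is the associated graded of $H_*\Omega X$, the two have the same Poincar\'e series; a truncated-polynomial factor of height greater than $2$ has strictly larger Poincar\'e series than the exterior factor on a generator of the same degree, so matching the series forces every $s_\beta=1$. Hence each $y_\beta^2=0$, the extension is trivial, and $H_*\Omega X\cong E_{\Z/2}(\sigma^{-1}PH_*X)$ as algebras. I expect this extension argument, rather than the $E^2$-computation or the formal collapse, to be the real content of the proof; the one auxiliary point to verify along the way is the graded-commutativity of the Pontryagin product, which is needed before Borel's theorem can be invoked.
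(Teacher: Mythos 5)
You should know at the outset that the paper contains no proof of this proposition: it is quoted as known machinery from Galatius \cite[Proposition 7.3]{5}, and the authors explicitly refer the reader to that source for the proof. So the comparison can only be with the standard argument, and your proposal is essentially that standard argument: the Koszul/K\"unneth computation of $\mathrm{Tor}^{H^*X}(\Z/2,\Z/2)$, the collapse because $E^2$ is generated by filtration-one classes which support no differentials and each $d_r$ is a derivation, and the identification of the edge homomorphism with the homology suspension are all correct as stated. (Two small caveats: dualizing $\mathrm{Tor}$ to $\mathrm{Cotor}$, and your later Poincar\'e-series count, both use a finite-type hypothesis; this is a standing assumption of the paper though it is absent from the statement of the proposition.)

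The one genuine soft spot is exactly the point you flag and then defer: Borel's structure theorem applies to \emph{commutative} connected Hopf algebras, and commutativity of the Pontryagin ring $H_*\Omega X$ is not automatic for a general simply connected $X$ --- loop spaces can have noncommutative Pontryagin rings (e.g.\ $H_*\Omega(S^2\vee S^2)$ is a tensor algebra on two one-dimensional classes), so here commutativity must be deduced from the polynomial hypothesis; it is part of what has to be proved, not an auxiliary verification. Fortunately the tools you have already assembled close this gap, and in fact they make the Borel/Poincar\'e-series step unnecessary. Since $E^\infty_0=F_0=\Z/2$ is concentrated in degree $0$, the quotient map $F_1\to E^\infty_1=F_1/F_0$ is injective in positive degrees, and after collapse $E^\infty_1=E^2_1=PH_*X$, so by your identification of the edge map this injection is the suspension $\sigma_*$ restricted to $F_1$. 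Now take $y,z\in F_1$ of positive degree. The elements $yz+zy$ and $y^2$ lie in $F_2$ and map to zero in $E^\infty_2$, because $E^\infty$ is an exterior (in particular commutative) algebra; hence they lie in $F_1$. But $\sigma_*$ annihilates all decomposables, so their images in $E^\infty_1$ vanish, and injectivity gives $yz+zy=0$ and $y^2=0$. As $H_*\Omega X$ is generated as an algebra by $F_1$ (because $E^\infty$ is generated by filtration one), this yields graded-commutativity and the triviality of the multiplicative extension in one stroke, completing the proof without invoking Borel's theorem at all.
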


The next result identifies some cases where $H^*Q_0X$ is a
polynomial algebra. Recall that given any space, one may define the
Frobenius homomorphism $s:H^*X\to H^*X$ as before, i.e. $s(x)=x^2$.
One then has the following \cite[Lemma 7.2]{5} .

\begin{lmm}
The cohomology algebra $H^*Q_0X$ is a polynomial algebra if
$s:H^*X\to H^*X$ is injective. Here $Q_0X$ denotes the base point
component of $QX$.
\end{lmm}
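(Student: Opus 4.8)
The plan is to reduce the statement to a surjectivity question for the Verschiebung on $H_*Q_0X$ and then settle it using the explicit polynomial generators of $H_*QX$ together with the standard interaction between the Verschiebung and the Kudo--Araki operations. Since $Q_0X$ is a connected infinite loop space, $H^*Q_0X$ is a connected, bicommutative Hopf algebra over $\Z/2$ of finite type, so by Borel's structure theorem it is a tensor product of monogenic Hopf algebras $\Z/2[y]$ and $\Z/2[y]/(y^{2^k})$. Such an algebra is polynomial precisely when no truncated factor occurs, and this happens if and only if the Frobenius $s:H^*Q_0X\to H^*Q_0X$, $s(y)=y^2$, is injective: $s$ is injective on $\Z/2[y]$ but kills $y^{2^{k-1}}$ in $\Z/2[y]/(y^{2^k})$. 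Dualizing over $\Z/2$ in each degree (everything is of finite type), $s$ is injective if and only if the Verschiebung $V:H_*Q_0X\to H_*Q_0X$, the $\Z/2$-dual of $s$, is surjective. Thus it suffices to prove that $V$ is surjective.

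First I would record the two inputs. On the one hand, the hypothesis that $s:H^*X\to H^*X$ is injective is, after the same finite-type dualization, exactly the statement that the dual squaring map $V_X:H_*X\to H_*X$ is surjective; and since the unit $X\to QX$ induces a ring map on cohomology, $s$ is natural, so $V_X$ is the restriction of $V$ to the image of $H_*X$ in $H_*Q_0X$ (after translating to the base component). On the other hand, I would use the standard relations between the Verschiebung and the Kudo--Araki operations,
$$VQ^{2i}=Q^iV,\qquad VQ^{2i+1}=0,$$
valid in the homology of any infinite loop space; these may be cited or obtained by dualizing the Nishida relations. Together with the fact that $V$ is a ring homomorphism, being dual to the Hopf endomorphism $s$, these determine $V$ on the polynomial generators $Q^Ix_\alpha$ of $H_*QX$.

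With these in hand the surjectivity becomes a direct generator chase. A polynomial generator has the form $Q^Jx_\beta$ with $J=(j_1,\dots,j_r)$ admissible and $\ex(Q^Jx_\beta)>0$. Using surjectivity of $V_X$, choose $y\in H_*X$ with $V_Xy=x_\beta$, so $\dim y=2\dim x_\beta$. Then the doubled sequence $2J=(2j_1,\dots,2j_r)$ is again admissible, and $\ex(Q^{2J}y)=2\,\ex(Q^Jx_\beta)>0$, so $Q^{2J}y$ is a genuine nonzero indecomposable element of $H_*QX$; applying the relations above entry by entry gives $V(Q^{2J}y)=Q^J(V_Xy)=Q^Jx_\beta$. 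Hence every polynomial generator lies in the image of $V$, and since $V$ is multiplicative its image is all of $H_*Q_0X$. This proves $V$ surjective, hence $s$ injective, hence $H^*Q_0X$ polynomial.

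The main obstacle is the bookkeeping needed to make this chase legitimate on the base component rather than on all of $QX$: for non-path-connected $X$ one must track the degree-zero grouplike translation classes $[g]$ and verify that the identification $H_*Q_gX\cong H_*Q_0X$ is compatible with $V$, so that the polynomial generators of $H_*Q_0X$ really do have the form $Q^Jx_\beta$ used above (as with the generators $x_i=Q^i[1]*[-2]$ in the case $X=S^0$). A secondary point to pin down carefully is the compatibility claim that $V_X$ is the restriction of the ambient Verschiebung, together with the precise citation for, or dualization of the Nishida relations giving, $VQ^{2i}=Q^iV$ and $VQ^{2i+1}=0$; once these are in place the argument is purely formal.
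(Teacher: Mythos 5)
Your argument is correct, but there is nothing in the paper to compare it against: the paper does not prove this lemma at all. It is quoted from Galatius \cite[Lemma 7.2]{5}, and the authors explicitly refer the reader to \cite{5} for the proofs of this and of Proposition 3.1. Measured against that source, your route is the standard one and, as far as I can tell, essentially the cited proof itself: Borel's theorem reduces polynomiality of the connected, bicommutative, finite-type Hopf algebra $H^*Q_0X$ to injectivity of its Frobenius; finite-type duality converts that to surjectivity of the Verschiebung $V$ on $H_*Q_0X$ and converts the hypothesis to surjectivity of $V_X$ on $H_*X$; and the identities $VQ^{2i}=Q^iV$, $VQ^{2i+1}=0$, together with multiplicativity of $V$, give surjectivity via the doubling trick $V(Q^{2J}y)=Q^JV_Xy$. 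Note that these identities are precisely what this paper records in Remark 3.11 for the square root map $r$ (namely $rQ^{2n}=Q^nr$, $rQ^{2n+1}=0$, and $rx_{2i}=x_i$ in $H_*Q_0S^0$, citing \cite{12}), so your ingredients are already part of the paper's toolkit.

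Two points need tightening. First, $VQ^{2i}=Q^iV$ and $VQ^{2i+1}=0$ are not literally ``dualized Nishida relations''; they follow either from the Cartan formula for the coproduct of $Q^nx$ (when pairing $\Delta Q^nx$ against $\xi\otimes\xi$, the off-diagonal pairs $(i,j)$, $(j,i)$ cancel mod $2$ by cocommutativity, leaving only $i=j$), or from the Nishida relations combined with the identification of $V$ on $H_{2n}$ with $Sq^n_*$ and the instability condition $Sq^r_*=0$ on $H_m$ for $2r>m$; give one of these derivations or a citation (e.g.\ \cite{9}, \cite{12}). Second, the case you defer as ``bookkeeping'' is the case the paper actually uses: Example 3.3 applies the lemma to $X=S^0$ and Example 3.4 to $X=\C P_+$, neither of which is path connected, so the generators of $H_*Q_0X$ are translated classes rather than classes $Q^Jx_\beta$. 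The chase does go through, and for $S^0$ it is clean: group-like classes satisfy $V[n]=[n]$ and $V$ is a ring map, so $Vx_{2i}=V(Q^{2i}[1]*[-2])=Q^i[1]*[-2]=x_i$, whence $V(Q^{2I}x_{2i})=Q^Ix_i$ hits every polynomial generator of $H_*Q_0S^0$. That verification belongs in the body of the proof rather than in a closing remark, since it covers the application of record.
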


The above theorems provides the main tool to calculate the homology
rings $H_*Q_0S^{-1}$ and $H_*\Omega_0QP$, where one chooses
$X=\overline{Q_0S^0},\overline{QP}$. Here $\overline{Y}$ denotes the
universal cover of a given space $Y$. We refer the reader to
\cite{5} for the proof of the machinery provided above. We recall
the calculation of $H_*Q_0S^{-1}$.

\begin{exm}
First, notice that the squaring map $H^*S^0\to H^*S^0$ is injective.
This implies that $H^*Q_0S^0$ is polynomial. Recall from Appendix D
that $Q_0S^0=P\times\overline{Q_0S^0}$. Hence $H^*\overline{Q_0S^0}$
is polynomial as well. On the other hand notice that $QS^{-1}=\Omega
Q_0S^0$, which implies that $Q_0S^{-1}=\Omega\overline{Q_0S^0}$. Now
putting $X=\overline{Q_0S^0}$ in Proposition 5.4 implies that
$H_*Q_0S^{-1}$ is an exterior algebra, with
$\sigma_*:QH_*Q_0S^{-1}\to PH_*Q_0S^0$ an isomorphism, i.e.
$$H_*Q_0S^{-1}=E_{\Z/2}(\sigma_*^{-1}PH_*Q_0S^0).$$
This is due to Cohen-Peterson \cite[Theorem 1.1]{2}.
\end{exm}

We shall combine the information by this example with the
$S^1$-example to establish our notation for $H_*Q_0S^{-1}$ which
will provide us with a ``geometric description'' of its generators.
But, first we look into another example.

\begin{exm}
Let $X=\C P, \C P_+$. Then the Frobenius homomorphisms $H^*X\to
H^*X$ is injective. We then obtain
$$\begin{array}{lll}
H_*Q\Sigma^{-1}\C P & \simeq & E_{\Z/2}(\sigma^{-1}_*PH_*Q\C P).
\end{array}$$
\end{exm}

The above example calculates $H_*Q\Sigma^{-1}\C P$. We only need to
describe the submodule of primitives in this algebra. However,
unlike the case of $H_*Q_0S^{-1}$, we do not have a natural and
geometric way of identifying the generators of $H_*Q\Sigma^{-1}\C P$.\\

First, we deal with $H_*Q_0S^{-1}$ and determine its structure as a
module over the Steenrod algebra $A$, and the Dyer-Lashof algerba
$R$.\\
The $S^1$-transfer is a map $\lambda_\C:Q\Sigma\C P_+\to QS^0$. The
homology of this map is known based on the work of
Mann-Miller-Miller \cite[Lemma 7.4]{11}. It factors through the
complex $J$-homomorphism
$$J_\C:U\to Q_1S^0.$$
Using the translation map $*[-1]$ we then will land in $Q_0S^0$. The
map $\lambda_\C$ is an infinite loop map, obtained as the infinite
loop extension of the composite
$$\xymatrix{ \Sigma\C P_+\ar[r]   & U\ar[r]  &Q_1S^0\ar[r]  & Q_0S^0.}$$
The mapping $\lambda_\C$ may be viewed as an extension of
$\nu:S^3\not\to S^0$ where $S^3$ sits as the bottom cell of
$\Sigma\C P$, and the inclusion maps $g_3$ to $\Sigma c_2$ in
homology. Hence $\Sigma c_2$ maps to $x_3+O_3$. In fact, $\Sigma
c_2$ maps to $p'_3$ as this has to give the action of $\nu:S^3\to
Q_0S^0$. Using this, and the action of Steenrod algebra on $\Sigma\C
P$, we observe that $\Sigma c_{2i}$ maps to $x_{2i+1}+O_{2i+1}$
where $O_{2i+1}$ denotes the other terms. On the other hand, notice
that $\Sigma c_i$ is primitive. Also, the image must have the same
behavior under the action of the Steenrod algebra as $\Sigma
c_{2i}$. Hence we obtain,
$$(\lambda_\C)_*\Sigma c_{2i}=p_{2i+1}+Q^{2i}x_1=p'_{2i+1}.$$
Moreover, notice that $\Sigma c_0$ maps to $x_1=p_1=p_1'$ where
$c_0$ is the generator coming from the disjoint base point. This
then allows one to calculate $(\lambda_\C)_*:H_*Q\Sigma\C P_+\to
H_*Q_0S^0$. Notice that this in particular implies that
$(\lambda_\C)_*:PH_*Q\Sigma\C P_+\to PH_*Q_0S^0$ is an epimorphism.
We are now ready to complete the calculation of $H_*Q_0S^{-1}$.

\begin{thm}
The homology algebra $H_*Q_0S^{-1}$ as an $R$-module is given by
$$E_{\Z/2}(Q^Iw'_{2i}:I\textrm{ admissible}, \dim I>2i),$$
with $w'_{2i}=(\Omega\lambda_\C)_*c_{2i}$ which satisfies
$\sigma_*w'_{2i}=p'_{2i+1}$. Two generators $Q^Iw'_{2i}$ and
$Q^Jw'_{2j}$ may be identified if and only if they map to the same
element in $H_*Q_0S^0$ under the homology suspension
$\sigma_*:H_{*-1}Q_0S^{-1}\to H_*Q_0S^0$. The behavior of generators
$w'_{2i}$ under the Steenrod operation is very much like $c_{2i}\in
H_{2i}\C P$, i.e.
$$Sq^{2k}_*w'_{2i}={i-k\choose k}w'_{2i-2k}.$$
This together with the Nishida relations completely determines the
$A$-module structure of $H_*Q_0S^{-1}$.\\
Moreover, the the mapping
$$(\Omega\lambda_\C)_*:H_*Q\C P_+\to H_*Q_0S^{-1}$$
is an epimorphism.
\end{thm}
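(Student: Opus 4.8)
The plan is to build the stated generators by transporting the known structure of $H_*Q_0S^0$ through the homology suspension, using the transfer computation established just above the theorem. By Example 5.5 (an application of Proposition 5.4 to $X=\overline{Q_0S^0}$), we already know abstractly that $H_*Q_0S^{-1}=E_{\Z/2}(\sigma_*^{-1}PH_*Q_0S^0)$ is exterior and that $\sigma_*\colon QH_*Q_0S^{-1}\to PH_*Q_0S^0$ is an isomorphism from indecomposables onto the primitives of $H_*Q_0S^0$. So the whole problem is to pin down a \emph{geometric} basis of the indecomposables, and this is exactly what looping the $S^1$-transfer provides. The key input is that $(\lambda_\C)_*\colon PH_*Q\Sigma\C P_+\to PH_*Q_0S^0$ is an epimorphism with $(\lambda_\C)_*\Sigma c_{2i}=p'_{2i+1}$, already recorded above.

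First I would produce the generators. Since $\Omega Q\Sigma\C P_+\simeq Q\C P_+$ and $\Omega Q_0S^0=QS^{-1}$, the loop map $\Omega\lambda_\C\colon Q\C P_+\to QS^{-1}$ is defined, and I set $w'_{2i}=(\Omega\lambda_\C)_*c_{2i}$. Naturality of the homology suspension gives $\sigma_*\circ(\Omega\lambda_\C)_*=(\lambda_\C)_*\circ\sigma_*$, and since $\sigma_*c_{2i}=\Sigma c_{2i}$ in $H_*Q\Sigma\C P_+$ this yields $\sigma_* w'_{2i}=(\lambda_\C)_*\Sigma c_{2i}=p'_{2i+1}$. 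Because $\Omega\lambda_\C$ is an infinite loop map, $(\Omega\lambda_\C)_*$ commutes with the Dyer--Lashof operations, so $Q^Iw'_{2i}=(\Omega\lambda_\C)_*Q^Ic_{2i}$; and since for infinite loop spaces the suspension commutes with the $Q^s$, I obtain $\sigma_* Q^Iw'_{2i}=Q^I\sigma_* w'_{2i}=Q^Ip'_{2i+1}$. As the $Q^Ip'_{2i+1}$ ($I$ admissible) span $PH_*Q_0S^0$ in Madsen's description, and $\sigma_*$ is an isomorphism onto these primitives, the indecomposables $Q^Iw'_{2i}$ map onto a spanning set; hence they generate the exterior algebra. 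The ``identified iff same suspension'' clause is then immediate from injectivity of $\sigma_*$ on indecomposables, and the decomposable primitives $(p'_{2i+1})^2$ are accounted for by $\sigma_* Q^{2i+1}w'_{2i}$, so nothing is missed.

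The Steenrod action is the easiest part: $(\Omega\lambda_\C)_*$ is induced by a map of spaces, hence natural with respect to the dual Steenrod operations, so $Sq^{2k}_* w'_{2i}=(\Omega\lambda_\C)_*Sq^{2k}_* c_{2i}$. Dualising the standard formula $Sq^{2k}c^i=\binom{i}{k}c^{i+k}$ in $H^*\C P$ gives $Sq^{2k}_* c_{2i}=\binom{i-k}{k}c_{2i-2k}$, whence $Sq^{2k}_* w'_{2i}=\binom{i-k}{k}w'_{2i-2k}$; together with the Nishida relations this fixes the full $A$-module structure on the $Q^Iw'_{2i}$. Finally, the epimorphism $(\Omega\lambda_\C)_*\colon H_*Q\C P_+\to H_*Q_0S^{-1}$ follows because its image is a subring containing every generator $Q^Iw'_{2i}$.

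The hard part will be the combinatorial bookkeeping that turns ``spanning'' into ``basis with the stated indexing'': verifying that the admissibility-plus-dimension condition $\dim I>2i$ (with $w'_{2i}$ in degree $2i$, so that $Q^Iw'_{2i}$ is a genuine, nonzero exterior generator rather than a vanishing square) cuts out precisely Madsen's basis of $PH_*Q_0S^0$ under $\sigma_*^{-1}$, and that the resulting generators are pairwise distinct with no hidden relations beyond the exterior ones. This amounts to checking that the excess conventions on the $\C P$ side match Madsen's primitivity conditions on the $Q_0S^0$ side; once that dictionary is set up, everything else is formal from the suspension isomorphism of Example 5.5 together with the transfer computation.
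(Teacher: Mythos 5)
Your proposal follows essentially the same route as the paper's own proof: define $w'_{2i}=(\Omega\lambda_\C)_*c_{2i}$ using the transfer computation $(\lambda_\C)_*\Sigma c_{2i}=p'_{2i+1}$, transport everything through the suspension isomorphism $\sigma_*:QH_*Q_0S^{-1}\to PH_*Q_0S^0$ supplied by the Eilenberg--Moore computation, read off the Steenrod action by naturality from $H_*\C P$, identify generators by their images under $\sigma_*$, and deduce the epimorphism from the fact that the image contains all generators. The differences are purely presentational (you make the naturality squares and the Kudo transgression step explicit, and you flag the indexing bookkeeping, which the paper likewise leaves implicit), so this is the same argument.
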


\begin{proof}
The fact that $(\lambda_\C)_*\Sigma c_{2i}=p'_{2i+1}$ allows us to
define unique elements $w'_{2i}\in H_{2i}Q_0S^{-1}$ by
$$(\Omega\lambda_\C)_*c_{2i}=w'_{2i}.$$
Notice that the space $Q_0S^{-1}$ is an infinite loop space, and
hence we may consider terms of the form $Q^Iw'_{2i}\in
H_*Q_0S^{-1}$. These classes have the property that
$$\sigma_*Q^Iw_{2i}'=Q^Ip_{2i+1}'$$
where $\sigma_*$ denotes the homology suspension. The fact that
elements of the form $Q^Ip'_{2i+1}$ generate all primitives in
$H_*Q_0S^0$ implies that elements of the form $Q^Iw'_{2i}$ generate
$QH_*Q_0S^{-1}$, and therefore $H_*Q_0S^{-1}$ is the exterior
algebra generated by $Q^Iw'_{2i}$ with $I$ admissible. This also
determines the action of the Dyer-Lashof algebra on the homology
ring $H_*Q_0S^{-1}$. Moreover, our definition of the generators
$w_{2i}'$ allows us to derive the action of the Steenrod operation
on these classes, namely we have
$$Sq^{2k}_*w'_{2i}={i-k\choose k}w'_{2i-2k}.$$
This together with the Nishida relations describes the action of the
Steenrod algebra on the generators $Q^Iw'_{2i}$, and hence
completely determines the action of the Steenrod algebra on the
homology ring $H_*Q_0S^{-1}$.\\
Finally notice that although we have identified generators of
$H_*Q_0S^{-1}$, however there are some relations among these
generators. For example consider $Q^3x_1=x_1^4=Q^2Q^1x_1\in
H_4Q_0S^0$. Hence in $H_*Q_0S^{-1}$ we have
$$Q^3w_0=Q^2Q^1w_0.$$
This then shows that two generators in this algebra maybe identified
if they map to the same primitive class under the homology
suspension. Finally, the definition of the generators $w'_{2i}$
shows that the looped transfer induces an epimorphism in homology.
This completes the proof.
\end{proof}

It is possible to choose a different set of generators for the
submodule of primitive in $H_*Q_0S^0$ in order to give a
presentation of $H_*Q_0S^{-1}$ with no relation among its generators
\cite[Proposition 5.37]{100}. However, this description is not as
much easy to work with as the above presentation. As the above
presentation is adequate for our purpose, we then choose to work
with this description. Indeed, it is possible to identify a
subalgebra of $H_*Q_0S^{-1}$ with no relations among its generators.
\begin{lmm}
There is no relation among the generators of the subalgebra of
$H_*Q_0S^{-1}$ given by
$$E_{\Z/2}(Q^Iw'_{2i}:(I,2i+1)\textrm{ admissible, }\ex(I,2i+1)>0),$$
with $\ex(I,2i+1)=i_1-(i_2+\cdots+i_r+2i+1)$ where
$I=(i_1,\ldots,i_r)$.
\end{lmm}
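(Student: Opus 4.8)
The plan is to push the statement across the homology suspension and then detect linear independence by a Dyer--Lashof word-length argument inside $H_*Q_0S^0$. By the structure theorem proved above, the suspension $\sigma_*\colon QH_*Q_0S^{-1}\to PH_*Q_0S^0$ is an isomorphism carrying the class of each generator $Q^Iw'_{2i}$ to $Q^Ip'_{2i+1}$, and $H_*Q_0S^{-1}$ is the exterior algebra on the $Q^Iw'_{2i}$. Consequently a relation among a subset of these generators is exactly a linear dependence among their images under $\sigma_*$, so the assertion is equivalent to the claim that the classes $\{Q^Ip'_{2i+1}:(I,2i+1)\textrm{ admissible},\ \ex(I,2i+1)>0\}$ are linearly independent in $PH_*Q_0S^0$. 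This is what I would prove.

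To do so I would work in $QH_*Q_0S^0$, which by the presentation $H_*Q_0S^0\simeq\Z/2[Q^Ix_i:(I,i)\textrm{ admissible},\ \ex>0]$ of \cite{1} has the admissible monomials $Q^Ix_i$ as a basis. Writing $x_i=Q^i[1]*[-2]$, I would grade $QH_*Q_0S^0$ by word-length $\lambda(Q^Ix_i)=l(I)+1$, the number of Dyer--Lashof letters in the admissible word on the fundamental class. The key point I would establish is that each operation $Q^a$ raises $\lambda$ by exactly one on the indecomposables: the Adem relations preserve the number of letters, while every reduction that could lower it (an excess-zero operation, or a cross-term in which a positive power $Q^b[-2]$ survives under the Cartan formula) produces a square or a product of positive-dimensional classes, hence a decomposable that vanishes in $QH_*Q_0S^0$.

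Granting this, I would extract the leading term of each primitive. From $p'_{2i+1}=p_{2i+1}+Q^{2i}x_1$ and $p_{2i+1}=x_{2i+1}$ modulo decomposables one has in $QH_*Q_0S^0$
$$[Q^Ip'_{2i+1}]=[Q^Ix_{2i+1}]+[Q^IQ^{2i}x_1].$$
Since $(I,2i+1)$ is admissible of positive excess, the first summand is the single basis generator $Q^Ix_{2i+1}$, of word-length $l(I)+1$; and for $i\geqslant1$ monotonicity forces $\lambda(Q^{2i}x_1)=2$, so $[Q^IQ^{2i}x_1]$ lives in word-length $l(I)+2$ (for $i=0$ there is no correction, as $p'_1=x_1$). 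Thus the lowest word-length part of $[Q^Ip'_{2i+1}]$ is precisely $Q^Ix_{2i+1}$. Now suppose $\sum_\alpha c_\alpha Q^{I_\alpha}p'_{2i_\alpha+1}=0$ and pass to $QH_*Q_0S^0$; taking the component of minimal word-length $n_0$ present, only the leading terms $Q^{I_\alpha}x_{2i_\alpha+1}$ of those inputs with $l(I_\alpha)+1=n_0$ can contribute, every correction term having word-length at least $n_0+1$. As distinct admissible sequences $(I_\alpha,2i_\alpha+1)$ index distinct basis generators, all these $c_\alpha$ vanish, contradicting the minimality of $n_0$ unless every $c_\alpha=0$. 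This yields the required independence.

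I expect the main obstacle to be the word-length monotonicity of the second paragraph, as it is the one place where the auxiliary translation $*[-2]$ in $x_i=Q^i[1]*[-2]$ must be controlled: one has to verify that, after applying the Cartan formula, every term in which a positive power $Q^b[-2]$ occurs is decomposable and so disappears in $QH_*Q_0S^0$, leaving only the honest $[1]$-word whose letter-count behaves as asserted. Once this is in place the leading-term bookkeeping is routine, and it also explains why the relations present in the full generating set of $H_*Q_0S^{-1}$ (such as $Q^3w_0=Q^2Q^1w_0$) never intrude here: in each of them the offending words either violate the admissibility of $(I,2i+1)$ or have vanishing excess, so they lie outside the subalgebra under consideration.
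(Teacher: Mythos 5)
Your proposal is correct and takes essentially the same route as the paper: the paper's entire proof is the remark that the lemma ``is an obvious outcome of applying the homology suspension,'' which is precisely your first reduction via the isomorphism $\sigma_*\colon QH_*Q_0S^{-1}\to PH_*Q_0S^0$ carrying $Q^Iw'_{2i}$ to $Q^Ip'_{2i+1}$. Your word-length filtration on $QH_*Q_0S^0$ is a sound and complete way of supplying the independence of the classes $Q^Ip'_{2i+1}$ that the paper leaves to the reader, since for $(I,2i+1)$ admissible of positive excess the leading terms $Q^Ix_{2i+1}$ are distinct polynomial generators and the corrections $Q^IQ^{2i}x_1$ sit in strictly higher word-length.
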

We leave the proof to the reader, as it is an obvious outcome of
applying the homology suspension.\\

Next, we determine the submodule of primitives in $H_*Q\C P$ and
$H_*Q_0S^{-1}$. This will imply that $(\Omega\lambda_\C)_*$ is an
epimorphism when restricted to the submodules. We quote the result
on this and refer the reader to \cite[Section 5.8]{100} for proofs
and details of the calculations.

\begin{prp}
Any primitive class $H_*Q_0S^{-1}$ maybe written as linear
combination of classes of the form $Q^L p_{4n+2}^{S^{-1}}$ and
$Q^Kp^{S^{-1}}_{i,j}$ where $L$ and $K$ are chosen to be admissible.
The primitive classes $p_{4n+2}^{S^{-1}}$, $Q^Kp^{S^{-1}}_{i,j}$ are
defined by
$$\begin{array}{lll}
p_{4n+2}^{S^{-1}}&=&w'_{4n+2}\in H_{4n+2}Q_0S^{-1}\\
p^{S^{-1}}_{i,j} &=& Q^{2i+1}w'_{2j}\in H_{2i+2j+1}Q_0S^{-1},
\end{array}$$
modulo decomposable terms, with $j$ being even.\\
Similarly, any primitive class in $H_*Q\C P$ maybe written as linear
combination of classes of the form $Q^L p_{4n+2}^{\C P}$ and
$Q^Kp^{\C P}_{i,j}$ where $L$ and $K$ are chosen to be admissible.
The primitive classes $p_{4n+2}^{\C P}$, $Q^Kp^{\C P}_{i,j}$ are
defined by
$$\begin{array}{lll}
p_{4n+2}^{\C P}&=& c_{4n+2}\in H_{4n+2}Q\C P\\
p^{\C P}_{i,j} &=& Q^{2i+1}c_{2j}\in H_{2i+2j+1}Q\C P,
\end{array}$$
modulo decomposable terms, with $j$ being even. Moreover,
$$\begin{array}{lll}
(\Omega \lambda_\C)_*p_{4n+2}^{\C P}&=&p_{4n+2}^{S^{-1}},\\
(\Omega \lambda_\C)_*p^{\C P}_{i,j} &=&p^{S^{-1}}_{i,j}.
\end{array}$$
\end{prp}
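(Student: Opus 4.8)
The plan is to reduce the whole statement to a single computation, that of the primitives of $H_*Q\C P$, and then transport everything to $H_*Q_0S^{-1}$ along the looped transfer. The preceding theorem gives a surjection of $R$-module Hopf algebras $(\Omega\lambda_\C)_*\colon H_*Q\C P_+\to H_*Q_0S^{-1}$ carrying $c_{2i}$ to $w'_{2i}$; being the homology of an infinite loop map it is a coalgebra map, hence sends primitives to primitives, and it commutes with the Dyer--Lashof operations. Granting in addition that it is onto $PH_*Q_0S^{-1}$, it then suffices to (i) identify the indecomposable generators of $H_*Q\C P$ that carry a primitive, and (ii) read off their images. Step (ii) is immediate from naturality, since $(\Omega\lambda_\C)_*Q^Lc_{4n+2}=Q^Lw'_{4n+2}$ and $(\Omega\lambda_\C)_*Q^KQ^{2i+1}c_{2j}=Q^KQ^{2i+1}w'_{2j}$, which simultaneously produces the generators $p^{S^{-1}}_{4n+2}$, $p^{S^{-1}}_{i,j}$ and both transfer relations.

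For step (i) I would work in the polynomial algebra $H_*Q\C P=\Z/2[Q^Ic_{2j}:I\text{ admissible},\ \ex(Q^Ic_{2j})>0]$ and analyse the reduced coproduct $\bar\psi$ by combining the Cartan formula for the Dyer--Lashof operations with the coalgebra structure of $H_*\C P$, namely $\bar\psi c_{2n}=\sum_{0<a<n}c_{2a}\otimes c_{2(n-a)}$. Since $c_2$ is the unique primitive generator of $\overline{H}_*\C P$ and, as recalled before the main theorem, applying any $Q^i$ to a primitive yields a primitive, all classes $Q^Ic_2$ are already primitive. For the remaining generators one must solve the correction problem: find a decomposable $d$ with $\bar\psi d=\bar\psi(\text{generator})$, so that subtracting $d$ yields a primitive with the prescribed indecomposable part.

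The decisive point, which also explains the shape of the answer, is the behaviour of the diagonal term. In $\bar\psi c_{2j}$ the self-paired summand $c_j\otimes c_j$ occurs precisely when $j$ is even, and over $\Z/2$ it cannot be cancelled by the reduced coproduct of a decomposable of the right degree: squaring contributes $\bar\psi(z^2)=(\bar\psi z)^2$, which lands in squared tensor factors, while products of distinct generators contribute only symmetric off-diagonal terms. Thus $c_{2j}$ is correctable to a primitive exactly when $j$ is odd, giving $p^{\C P}_{4n+2}=c_{4n+2}$ modulo decomposables. When $j$ is even the obstruction is removed by an \emph{odd} operation: in $\bar\psi(Q^{2i+1}c_{2j})$ the contribution of $c_j\otimes c_j$ is $\sum_{s+t=2i+1}Q^sc_j\otimes Q^tc_j$, and since $2i+1$ is odd no term has $s=t$, so only symmetric off-diagonal terms survive, whereas an even operation would reintroduce a diagonal term $Q^ic_j\otimes Q^ic_j$. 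This isolates $Q^{2i+1}c_{2j}$ with $j$ even as the second family, after which $Q^L$ and $Q^K$ are layered on using once more that each $Q^i$ preserves primitivity.

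The main obstacle is the rest of step (i): proving that the off-diagonal symmetric part of each $\bar\psi(\text{generator})$ is genuinely killable by a decomposable (the constructive, sufficiency half) and that the two families above exhaust all indecomposables carrying primitives. This is the honest cobar computation, organised by the Steenrod action $Sq^{2k}_*c_{2i}=\binom{i-k}{k}c_{2i-2k}$ together with the Nishida relations, and is the part I would carry out in detail as in \cite[Section 5.8]{100}. A secondary point to verify is the surjectivity of $(\Omega\lambda_\C)_*$ on primitives used in step (ii); I would obtain it by dualising the inclusion of Hopf algebras $H^*Q_0S^{-1}\hookrightarrow H^*Q\C P_+$ and checking injectivity on indecomposables, which follows from the polynomial, respectively exterior, structures identified in the examples above together with the surjectivity already established in the preceding theorem.
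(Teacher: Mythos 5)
The first thing to note is that the paper itself contains no proof of this proposition: immediately before the statement it says ``We quote the result on this and refer the reader to \cite[Section 5.8]{100} for proofs and details of the calculations.'' So your sketch can only be compared with the strategy the paper assembles around the statement, namely the Milnor--Moore exact sequence $0\to P(sH)\to PH\to QH\to Q(rH)\to 0$ of Remark 3.11 and the transfer calculation of Theorem 3.5. Measured against that, your identification of the two families is the intended mechanism: your ``diagonal term'' analysis is exactly the statement that an indecomposable generator lifts to a primitive if and only if it lies in the kernel of the square root map $r$, and the parities you extract agree with $rc_{4n}=c_{2n}$, $rc_{4n+2}=0$, $rQ^{2n+1}=0$, $rQ^{2n}=Q^n r$.

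The genuine gap is in the reduction on which you build everything. You compute $PH_*Q\C P$ and then obtain $PH_*Q_0S^{-1}$ by transport, which requires $(\Omega\lambda_\C)_*$ to be onto primitives; your justification --- dualise and check that the dual map is injective on indecomposables --- is circular, because via $PK\cong (QK^*)^*$ that injectivity is literally the dual reformulation of the surjectivity on primitives being sought. Moreover, neither statement follows formally from $(\Omega\lambda_\C)_*$ being onto, nor from polynomial/exterior structure: a surjection of connected bicommutative Hopf algebras over $\Z/2$ need not be onto primitives. For example, the surjection of divided power Hopf algebras dual to the inclusion $\Z/2[x^2]\subset\Z/2[x]$ (with $x$ primitive; both divided power algebras are exterior as $\Z/2$-algebras) hits the target's degree-two primitive $\eta$ only by the non-primitive class $\gamma_2\xi$, while the source has no primitive at all in that degree. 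This is why the paper runs the logic in the opposite direction: it computes the primitives of $H_*Q\C P$ and of $H_*Q_0S^{-1}$ separately (the latter directly inside the exterior algebra of Theorem 3.5, by the same square-root criterion), and the assertion that the transfer is an epimorphism on primitives is then a corollary read off from the two answers --- see the sentence preceding the proposition. A second, smaller, defect: even on the $\C P$ side, the spanning step you defer is where the real content sits, and your phrasing of it (the two families ``exhaust all indecomposables carrying primitives'') is false if read literally. The monomial $Q^{11}Q^6c_4$ is admissible of positive excess and lies in $\ker r$ (it has an odd entry), but it is of neither form, since its bottom class is $c_4$ and its bottom operation $Q^6$ is even; it is captured only as the indecomposable part of $Q^{12}p^{\C P}_{2,2}$, via the Adem relation $Q^{12}Q^5=Q^{11}Q^6$. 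So the exhaustion argument must track the Adem rewriting of inadmissible composites $Q^KQ^{2i+1}$, a mechanism your sketch never engages; deferring it to \cite[Section 5.8]{100} defers precisely the mathematics that the proposition is quoting.
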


Recall from Example 3.4, that we may apply Proposition 3.4 to
calculate $H_*Q\Sigma^{-1}\C P$ using
$$H_*Q\Sigma^{-1}\C P  \simeq  E_{\Z/2}(\sigma^{-1}_*PH_*Q\C P),$$
and our calculation of the primitive classes. We then have the
following observation.
\begin{prp}
As an $R$-module $H_*Q\Sigma^{-1}\C P$ is given by the exterior
algebra over the generators $Q^Iv_{4n+1}^{\C P}$ and
$Q^Lv_{i,j-1}^{\C P}$ with $I$ and $L$ admissible, and $\dim I>4n+1$
and $\dim L>2i+2j$. Here
$$\begin{array}{lll}
\sigma_*v_{4n+1}^{\C P} & = & p_{4n+2}^{\C P},\\
\sigma_*v_{i,j-1}^{\C P}& = & p_{i,j}^{\C P},
\end{array}$$
with $v_{4n+1}^{\C P}\in QH_{4n+1}Q\Sigma^{-1}\C P$ and
$v_{i,j-1}^{\C P}\in QH_{2i+2j}Q\Sigma^{-1}\C P$ where $j$ is even.
The generators $Q^Iv_{4n+1}^{\C P}$ are independent from each other
for different choices of admissible $I$. Two generators of the form
$Q^Lv_{i,j-1}^{\C P}$ are identified if they map to the same class
in $H_*Q\C P$ under the homology suspension.
\end{prp}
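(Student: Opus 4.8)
The plan is to obtain the entire structure by transporting the computation of the primitives in $H_*Q\C P$ across the suspension isomorphism furnished by the Eilenberg--Moore spectral sequence, exactly as was done for $H_*Q_0S^{-1}$ in the preceding theorem. First I would record the identification $Q\Sigma^{-1}\C P=\colim\Omega^i\Sigma^{i-1}\C P=\Omega Q\C P$. Since the bottom cell of $\C P$ sits in dimension $2$, one has $\pi_1 Q\C P=\pi_1^s\C P=0$, so $Q\C P$ is simply connected, and by the Frobenius criterion (the lemma on when $H^*Q_0X$ is polynomial) $H^*Q\C P$ is polynomial. Hence the Eilenberg--Moore proposition applies with $X=Q\C P$ and yields the two facts on which everything rests: $H_*Q\Sigma^{-1}\C P\simeq E_{\Z/2}(\sigma_*^{-1}PH_*Q\C P)$, and the homology suspension
$$\sigma_*:QH_*Q\Sigma^{-1}\C P\to PH_*Q\C P$$
is an isomorphism. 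This is precisely the content already extracted in Example 3.4.

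Next I would import the description of $PH_*Q\C P$ from Proposition 3.7, which says the primitives are spanned by the classes $Q^Lp_{4n+2}^{\C P}=Q^Lc_{4n+2}$ and $Q^Kp^{\C P}_{i,j}=Q^KQ^{2i+1}c_{2j}$ (modulo decomposables, $j$ even), with $L,K$ admissible. Because $\sigma_*$ is an isomorphism onto the primitives and commutes with the Dyer--Lashof operations, I may define
$$v_{4n+1}^{\C P}=\sigma_*^{-1}p_{4n+2}^{\C P},\qquad v_{i,j-1}^{\C P}=\sigma_*^{-1}p^{\C P}_{i,j},$$
lying in degrees $4n+1$ and $2i+2j$ respectively (suspension drops degree by one), whence $\sigma_*Q^Iv_{4n+1}^{\C P}=Q^Ip_{4n+2}^{\C P}$ and $\sigma_*Q^Lv_{i,j-1}^{\C P}=Q^Lp^{\C P}_{i,j}$. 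Since the images of the classes $Q^Iv_{4n+1}^{\C P}$ and $Q^Lv_{i,j-1}^{\C P}$ exhaust a spanning set of $PH_*Q\C P$, their preimages span $QH_*Q\Sigma^{-1}\C P$; as the algebra is exterior on its indecomposables, these classes generate $H_*Q\Sigma^{-1}\C P$ as claimed.

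It remains to settle the independence and identification assertions, and this is where the $\sigma_*$-isomorphism does the real work, letting me avoid explicit Adem--relation bookkeeping in $R$. For the first family, the classes $Q^Ic_{4n+2}$ with $I$ admissible and satisfying the excess condition recorded as $\dim I>4n+1$ are distinct polynomial generators of $H_*Q\C P$, hence linearly independent; being primitive modulo decomposables they remain independent in $PH_*Q\C P$, and applying the injection $\sigma_*^{-1}$ shows the $Q^Iv_{4n+1}^{\C P}$ are independent. The second family is genuinely different: the composites $Q^LQ^{2i+1}c_{2j}$ satisfy relations coming from the Adem relations in the Dyer--Lashof algebra (the analogue of the relation $Q^3w_0=Q^2Q^1w_0$ for $H_*Q_0S^{-1}$), so the $Q^Lv_{i,j-1}^{\C P}$ need not be independent a priori. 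However, since $\sigma_*$ is an isomorphism on indecomposables, two such generators coincide exactly when their suspensions $Q^Lp^{\C P}_{i,j}$ coincide in $H_*Q\C P$, which is the stated identification rule.

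The main obstacle is the second family: understanding precisely which composites $Q^LQ^{2i+1}c_{2j}$ become equal after reduction to admissible form. The point of routing everything through $\sigma_*$ is to convert this Adem--relation problem into the clean criterion ``equal if and only if equal after suspension,'' so the only remaining care is to check that no relation is introduced or lost in passing from $H_*Q\C P$ to $H_*Q\Sigma^{-1}\C P$ — which is guaranteed because $\sigma_*$ is an isomorphism of the relevant modules. A secondary matter is the parity bookkeeping ($j$ even, subscripts $\equiv 2 \bmod 4$) inherited from Proposition 3.7, which ensures that the listed generators are exactly those whose suspensions are primitive.
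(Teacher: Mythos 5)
Your proposal is correct and takes essentially the same route as the paper: the paper presents this proposition as an immediate observation obtained by combining Example 3.4 (the Eilenberg--Moore/Frobenius identification $H_*Q\Sigma^{-1}\C P\simeq E_{\Z/2}(\sigma_*^{-1}PH_*Q\C P)$, with $\sigma_*:QH_*Q\Sigma^{-1}\C P\to PH_*Q\C P$ an isomorphism) with Proposition 3.7 (the computation of $PH_*Q\C P$), transporting the primitive generators backwards along $\sigma_*$ exactly as you do. Your write-up simply makes explicit what the paper leaves implicit --- the verification that $Q\C P$ is simply connected with polynomial cohomology, the compatibility of $\sigma_*$ with the Dyer--Lashof operations, and the independence/identification bookkeeping for the two families of generators.
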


Notice that the above presentation does not give a geometric meaning
for the generators of $H_*Q\Sigma^{-1}\C P$, i.e. we do not know of
a natural way to define the generators $v_{4n+1}^{\C P}$ and
$v_{i,j-1}^{\C P}$. However, this description is enough to for our
purpose. Notice that any spherical class $H_*Q_0S^{-1}$ is
primitive, and hence lies in the image of $(\Omega\lambda_\C)_*$.
Moreover, the description of $H_*Q\Sigma^{-1}\C P$ tell us that any
primitive class in $H_*Q\C P$ pulls back through the homology
suspension. The following observation is then clear.

\begin{lmm}
Suppose $\xi_{-1}\in H_*Q_0S^{-1}$ is a spherical class. This pulls
back to a spherical class in $\xi_{-2}\in H_*QS^{-2}$ which lives in
an exterior subalgebra of $H_*Q_0S^{-2}$ given by
$$E_{\Z/2}(Q^Iv_{4n+1},Q^Lv_{i,j-1}:I,L\textrm{ admissible}),$$
where $v_{4n+1}\in H_{4n+1}QS^{-2}$ and $v_{i,j-1}\in
H_{2i+2j}QS^{-2}$ are defined by
$$\begin{array}{lll}
v_{4n+1} &=&(\Omega^2\lambda_\C)_*v_{4n+1}^{\C P} ,\\
v_{i,j-1}&=&(\Omega^2\lambda_\C)_*v_{i,j-1}^{\C P}.
\end{array}$$
Moreover, the definition of these classes imply that
$$\begin{array}{lll}
\sigma_*v_{4n+1} &=&p_{4n+2}^{S^{-1}},\\
\sigma_*v_{i,j-1}&=&p_{i,j}^{S^{-1}}.
\end{array}$$
Kudo's transgression theorem also implies that
$$\begin{array}{lll}
\sigma_*Q^Iv_{4n+1} &=&Q^Ip_{4n+2}^{S^{-1}},\\
\sigma_*Q^Lv_{i,j-1}&=&Q^Lp_{i,j}^{S^{-1}}.
\end{array}$$
\end{lmm}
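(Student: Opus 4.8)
The plan is to handle the three assertions of the statement separately: the geometric desuspension that produces the spherical class $\xi_{-2}$, the suspension formulas for the generators $v_{4n+1}$ and $v_{i,j-1}$, and the membership of $\xi_{-2}$ in the exterior subalgebra. First I would construct $\xi_{-2}$ by loop--space adjunction. Writing $\xi_{-1}=hf$ for a representative $f\colon S^m\to Q_0S^{-1}$, and using $QS^{-2}=\Omega Q_0S^{-1}$, the adjoint $\tilde f\colon S^{m-1}\to QS^{-2}$ carries the basepoint to the constant loop and so has image in the base component $Q_0S^{-2}$; I set $\xi_{-2}=h\tilde f$, a spherical, hence primitive, class in $H_{m-1}Q_0S^{-2}$. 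Since $f$ factors as the evaluation $\Sigma\Omega Q_0S^{-1}\to Q_0S^{-1}$ composed with $\Sigma\tilde f$, naturality of the Hurewicz homomorphism gives $\sigma_*\xi_{-2}=\xi_{-1}$.

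Next I would record the suspension formulas, which are essentially a naturality computation. By definition $v_{4n+1}=(\Omega^2\lambda_\C)_*v^{\C P}_{4n+1}$ and $v_{i,j-1}=(\Omega^2\lambda_\C)_*v^{\C P}_{i,j-1}$, and $\Omega^2\lambda_\C$ is the loop of $\Omega\lambda_\C\colon Q\C P_+\to Q_0S^{-1}$. Naturality of the homology suspension under a loop map then yields $\sigma_*(\Omega^2\lambda_\C)_*=(\Omega\lambda_\C)_*\sigma_*$. Combining this with $\sigma_*v^{\C P}_{4n+1}=p^{\C P}_{4n+2}$ (Proposition 3.8) and $(\Omega\lambda_\C)_*p^{\C P}_{4n+2}=p^{S^{-1}}_{4n+2}$ (Proposition 3.7) gives $\sigma_*v_{4n+1}=p^{S^{-1}}_{4n+2}$, and likewise $\sigma_*v_{i,j-1}=p^{S^{-1}}_{i,j}$. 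The relations $\sigma_*Q^Iv_{4n+1}=Q^Ip^{S^{-1}}_{4n+2}$ and $\sigma_*Q^Lv_{i,j-1}=Q^Lp^{S^{-1}}_{i,j}$ follow from Kudo's transgression theorem, which asserts that $\sigma_*$ commutes with the Dyer--Lashof operations on these classes.

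For the membership of $\xi_{-2}$ in $E:=E_{\Z/2}(Q^Iv_{4n+1},Q^Lv_{i,j-1})$ I would argue as follows. The class $\sigma_*\xi_{-2}=\xi_{-1}$ is primitive in $H_*Q_0S^{-1}$, so by our description of the primitives (Proposition 3.7) it is a linear combination of terms $Q^Lp^{S^{-1}}_{4n+2}$ and $Q^Kp^{S^{-1}}_{i,j}$. By the suspension formulas just established, the same combination $\eta$ of $Q^Lv_{4n+1}$ and $Q^Kv_{i,j-1}$ lies in $E$ and satisfies $\sigma_*\eta=\xi_{-1}$, whence $\sigma_*(\xi_{-2}-\eta)=0$. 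Since the homology suspension annihilates decomposables, $\xi_{-2}-\eta$ is a decomposable primitive; over $\Z/2$ such a class is a sum of squares of primitive classes, and in particular is even-dimensional. This already settles the odd-dimensional case, where $\xi_{-2}-\eta$ must vanish and $\xi_{-2}=\eta\in E$.

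The hard part will be the even-dimensional case, and underlying it two points that must be nailed down: that the generators $v_{4n+1},v_{i,j-1}$ (hence $\eta$) are themselves primitive, so that $\xi_{-2}-\eta$ is genuinely a primitive in $\ker\sigma_*$; and that $H_*Q_0S^{-2}$ is \emph{not} exterior, because $Q_0S^{-1}$ fails to be simply connected ($\pi_1Q_0S^{-1}=\pi_2^s$), so the Eilenberg--Moore isomorphism $\sigma_*\colon QH_*\to PH_*$ is unavailable and $\ker\sigma_*$ genuinely contains classes outside $E$. The obstacle is thus to show that the square-correction $\xi_{-2}-\eta$ is a sum of squares of elements \emph{of} $E$. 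I would attack this by transporting the entire discussion one stage up: using that any spherical, hence primitive, class of $H_*Q_0S^{-1}$ lifts along the transfer through the epimorphism $(\Omega\lambda_\C)_*$ on primitives, desuspending that lift inside the exterior algebra $H_*Q\Sigma^{-1}\C P$ of Proposition 3.8, where squares are transparent, and then applying $(\Omega^2\lambda_\C)_*$ to land the whole class, correction included, in $E$.
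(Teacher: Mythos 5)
Your first two paragraphs are sound, and they coincide with what the paper actually offers: the paper's entire ``proof'' of this lemma is the paragraph preceding it, which records that a spherical class in $H_*Q_0S^{-1}$ is primitive and hence lies in the image of $(\Omega\lambda_\C)_*$ (Proposition 3.7), that every primitive of $H_*Q\C P$ desuspends by the description of $H_*Q\Sigma^{-1}\C P$ (Proposition 3.8), and then declares the lemma clear. Your adjunction construction of $\xi_{-2}$, the naturality identity $\sigma_*(\Omega^2\lambda_\C)_*=(\Omega\lambda_\C)_*\sigma_*$, and the appeal to Kudo's transgression theorem are the correct way to make that sketch precise.

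The trouble is in your third and fourth paragraphs, and it is a genuine gap. First, the inference ``$\sigma_*(\xi_{-2}-\eta)=0$ and $\sigma_*$ annihilates decomposables, hence $\xi_{-2}-\eta$ is a decomposable primitive'' runs backwards: that fact gives decomposables $\subseteq\ker\sigma_*$, not $\ker\sigma_*\cap PH_*Q_0S^{-2}\subseteq$ decomposables. As you yourself observe a paragraph later, no Eilenberg--Moore argument is available for $Q_0S^{-2}$ (since $\pi_1Q_0S^{-1}=\Z/2\neq 0$), so nothing controls the primitives in $\ker\sigma_*$; consequently even your odd-dimensional case is not actually settled, and your appeal to ``sums of squares of primitives'' also needs Frobenius injectivity, which fails in an algebra with exterior factors. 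Second, your proposed attack on the remaining case is circular: lifting the primitive $\xi_{-1}$ through $(\Omega\lambda_\C)_*$, desuspending the lift inside the exterior algebra $H_*Q\Sigma^{-1}\C P$, and pushing forward by $(\Omega^2\lambda_\C)_*$ can only ever manufacture classes $\eta'$ with $\sigma_*\eta'=\xi_{-1}$; it says nothing about the particular class $\xi_{-2}$, i.e.\ about the correction $\xi_{-2}-\eta$, which is exactly what is at issue. What is missing is an argument pinning down $PH_*Q_0S^{-2}\cap\ker\sigma_*$ --- for instance an analogue of Proposition 3.7 one level further down, showing $(\Omega^2\lambda_\C)_*$ is epimorphic on primitives of $H_*Q_0S^{-2}$, after which the primitivity of the spherical class $\xi_{-2}$ itself would place it in the exterior subalgebra directly, with no reference to $\eta$ at all. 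In fairness, the paper supplies nothing beyond your first two paragraphs either: it calls the lemma clear, and in the proof of Theorem 3.10 it silently absorbs the ambiguity into the decomposable term $D_{-2}$. You have in effect located the soft spot in the paper's own argument, but your proposal as written does not repair it.
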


We conclude this section with the following theorem.

\begin{thm}
Let $\xi=\zeta^{2^t}\in H_*Q_0S^0$ be a spherical class with
$\sigma_*\zeta\neq 0$. Then it is impossible to have $t>1$.
\end{thm}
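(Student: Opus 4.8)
The plan is to assume $t\geq 2$ and manufacture a contradiction by desuspending $\xi$ \emph{twice}, into the exterior algebra $H_*QS^{-2}$, and then applying a single $Sq^1_*$. I first record the reductions. Since $\xi$ is spherical it is primitive, and because $H_*Q_0S^0$ is polynomial (so $H_*Q_0S^0\otimes H_*Q_0S^0$ has no nilpotents) the primitivity of $\zeta^{2^t}$ forces $\zeta$ itself to be primitive; as $\sigma_*\zeta\neq 0$ it is moreover indecomposable. Using the standard identities $Sq^{2k}_*(y^2)=(Sq^k_*y)^2$ and $Sq^{2k+1}_*(y^2)=0$ together with the absence of nilpotents, the $A$-annihilation of $\xi=(\zeta^{2^{t-1}})^2$ descends to $\zeta^{2^{t-1}}$ and, inductively, to $\zeta$; so $\zeta$ is an $A$-annihilated indecomposable primitive, expressible in Madsen's basis as a sum of $A$-annihilated terms $Q^Ip'_{2i+1}$ of positive excess. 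Writing $m=\dim\zeta$ and applying Kudo--Araki ($y^2=Q^{|y|}y$), we have $\xi=Q^{2^{t-1}m}Q^{2^{t-2}m}\cdots Q^{m}\zeta$, whose \emph{top} Dyer--Lashof operation $Q^{2^{t-1}m}$ is even precisely because $t\geq 2$.

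Next I would desuspend. Spherical classes pull back under the homology suspension, so $\xi$ yields a spherical $\xi_{-1}\in H_*Q_0S^{-1}$ with $\sigma_*\xi_{-1}=\xi$, and the pullback lemma into $H_*QS^{-2}$ then yields a spherical $\xi_{-2}\in H_*QS^{-2}$ with $\sigma_*\xi_{-2}=\xi_{-1}$; crucially, both $H_*Q_0S^{-1}$ and $H_*QS^{-2}$ are exterior algebras. Using that $\sigma_*$ is $R$-linear (Kudo transgression) together with the generator correspondences $\sigma_*Q^Iw'_{2i}=Q^Ip'_{2i+1}$ and $\sigma_*Q^Iv_{4n+1}=Q^Ip^{S^{-1}}_{4n+2}$, $\sigma_*Q^Lv_{i,j-1}=Q^Lp^{S^{-1}}_{i,j}$, and the description of $\zeta$ in Madsen's basis, I would identify the leading (indecomposable) term of $\xi_{-2}$ as $T=Q^{2^{t-1}m}Q^{2^{t-2}m}\cdots Q^{m}\tilde u$, where $\tilde u$ is the twofold desuspension of the top generator of $\zeta$ and has dimension $m-2$.

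The contradiction then follows from one application of $Sq^1_*$. Since the top operation $b=2^{t-1}m$ is even, the Nishida relation $Sq^1_*Q^b=\sum_r\binom{b-1}{1-2r}Q^{b-1+r}Sq^r_*$ leaves only the $r=0$ summand, with coefficient $\binom{2^{t-1}m-1}{1}=1$, so $Sq^1_*T=Q^{2^{t-1}m-1}Q^{2^{t-2}m}\cdots Q^{m}\tilde u$. The subword $Q^{2^{t-2}m}\cdots Q^{m}\tilde u$ has dimension $(m-2)+m(2^{t-1}-1)=2^{t-1}m-2$, whence $\ex(Sq^1_*T)=(2^{t-1}m-1)-(2^{t-1}m-2)=1>0$; thus $Sq^1_*T$ is a genuine indecomposable generator of the exterior algebra $H_*QS^{-2}$ and is nonzero. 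Every remaining contribution to $Sq^1_*\xi_{-2}$ is $Sq^1_*$ of a decomposable, hence decomposable by the Cartan formula, and so cannot cancel $Sq^1_*T$. Therefore $Sq^1_*\xi_{-2}\neq 0$, contradicting the $A$-annihilation of the spherical class $\xi_{-2}$; consequently $t\leq 1$.

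I expect the main obstacle to be the identification of the leading term in the second step, and the structural explanation of why \emph{two} desuspensions are indispensable. At level $-1$ the identical computation produces a term of excess $0$, i.e.\ a square, which vanishes in the exterior algebra $H_*Q_0S^{-1}$ and yields nothing; the extra desuspension drops the relevant dimension by one more, raising the excess to $1$ and converting the vanishing square into a nonzero indecomposable. (By contrast, for the positive spheres $H_*QS^{n-1}$ is polynomial, the excess-$0$ square already survives, and a single desuspension suffices --- this is the promised simplified argument for $n>1$.) The genuinely technical work is to pin down $\tilde u$ and the admissible form of $T$ from the explicit generators furnished by the complex transfer, to verify non-cancellation among the several leading terms of $Sq^1_*\xi_{-2}$ when $\zeta$ has more than one summand --- handled by the separation mechanism of Subsection 2.1 --- and to dispose of the low-dimensional edge cases (such as $\zeta=x_1$, where $\tilde u$ would be forced into negative degree) by a direct check.
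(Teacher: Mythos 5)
Your proposal follows essentially the same route as the paper's own proof: reduce to $\zeta$ being an $A$-annihilated primitive, desuspend twice into the exterior algebras $H_*Q_0S^{-1}$ and $H_*QS^{-2}$ via the transfer-based generators, and apply $Sq^1_*$, where the evenness of the top operation together with the Nishida relation produces an admissible term of excess $1$ that cannot be cancelled by decomposables --- including your exactly-matching explanation that one desuspension fails because the corresponding term has excess $0$ and dies as a square in the exterior algebra, and that for $QS^n$ with $n>1$ a single desuspension suffices. The only differences are cosmetic: you treat general $t\geqslant 2$ uniformly where the paper does $t=2$ and declares the other cases similar, and the paper organizes the intermediate bookkeeping (killing $D_{-1}$, sorting terms by parity of $l$) through the square-root map, which you defer to the separation mechanism.
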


\begin{rmk}
We would like to recall some facts about Hopf algebras
\cite[Propositon 4.23]{12} which will be use in the proof of the
above theorem. Suppose $H$ is a connected bicommutative Hopf algebra
of finite type over $k=\Z/2$. Then there is an exact sequence of the
following form
$$0\to P(s H)\to PH\to QH\to Q(r H)\to 0.$$
Here Frobenius homomorphism $s=s_H:H\rightarrow H$ is given by
$s_H(h)=h^2$. The map $QH\to Qk(r H)$ is the square root map.\\
In particular, examples $H=H_*Q_0S^n$ with $n\in\Z$ satisfy the
conditions stated above. The square root map $r:H\to H$ in our
examples has the property that
$$\begin{array}{lll}
rQ^{2n}  &=&Q^nr,\\
rQ^{2n+1}&=&0.
\end{array}$$
The above theorem then tells us that if $\xi\in H$ is a primitive
which is not a square, then the image of the $\xi$ in $QH$ must
belong to the kernel of the square root map. For example, in
$H_*Q_0S^0$ we have that
$$\begin{array}{lll}
rx_{2i}  &=&x_i,\\
rx_{2i+1}&=&0.
\end{array}$$
Hence, if $Q^Ix_m+D$, with $D$ being a sum of decomposable terms, is
a primitive class which is not a square then $Q^Ix_m$ must belong to
the kernel of the square root map $r:QH_*Q_0S^0\to QH_*Q_0S^0$. This
then implies that either $I$ must have an odd entry, or $m$ is odd.
\end{rmk}

\begin{proof}
We do the proof for $t=2$, i.e. $\xi=\zeta^4=Q^{2d}Q^d\zeta$ where
$d=\dim\zeta$. Other cases are similar. Since $\xi$ is an
$A$-annihilated primitive, then $\zeta$ also must be an
$A$-annihilated primitive class. As we recall at the beginning, we
may write $\zeta$ as a sum, $Q^Jp'_{2j+1}$ where $J$ is admissible
but $(J,2j+1)$ is not necessarily admissible. Hence we may write
$$\xi=\sum Q^{2d}Q^d Q^Lp'_{2l+1},$$
with $L$ admissible, taking all of the above terms into one big sum,
where some $(L,2l+1)$ are admissible and some are not. Such a class
pulls back to a $4d-1$ dimensional class $\xi_{-1}\in
H_{4d-1}Q_0S^{-1}$ given by
$$\xi_{-1}=\sum Q^{2d}Q^d Q^Lw'_{2l}+D_{-1},$$
where $D_{-1}$ denotes the decomposable part. This is an odd
dimensional primitive class, i.e. its indecomposable part must
belong to the kernel of the square root map. Hence either $d$ is
odd, $L$ has at least one odd entry, or $l$ is odd. Hence we may
rewrite the above class as
$$\xi_{-1}=\sum_{l\textrm{ odd}}Q^{2d}Q^dQ^Lw'_{2l}+\sum_{l\textrm{
even}}Q^{2d}Q^dQ^Lw'_{2l}+D_{-1}.$$ We have already calculated the
set of primitive classes in $H_*QS^{-1}$, hence we write
$$\xi_{-1}=\sum_{l\textrm{ odd}}Q^{2d}Q^dQ^Lp_{2l}^{S^{-1}}+\sum_{l\textrm{
even}} Q^{2d}Q^Kp_{k,l}^{S^{-1}}+D_{-1},$$ where $D_{-1}$ is an odd
dimensional decomposable primitive class. Hence $D_{-1}=0$, i.e.
$$\xi_{-1}=\sum_{l\textrm{ odd}}Q^{2d}Q^dQ^Lp_{2l}^{S^{-1}}+\sum_{l\textrm{
even}} Q^{2d}Q^Kp_{k,l}^{S^{-1}}.$$ Notice that
$Q^dQ^Lp_{2l}^{S^{-1}}$ and $Q^Kp_{k,l}^{S^{-1}}$ are of dimension
$2d-1$. We plan make use of $Sq^1_*$, but not here as in this
exterior algebra $Sq^1_*\xi_{-1}$ will be a square which is trivial
in the exterior algebra. Instead we desuspend once more. The class
$\xi_{-1}$ pulls back to a spherical class $\xi_{-2}\in
H_*Q_0S^{-2}$. according to Lemma implies that $\xi_{-2}$ is in the
exterior subalgebra generated by $\im(\Omega^2\lambda_\C)_*$. Hence,
we may write
$$\xi_{-2}=\sum_{l\textrm{ odd}}Q^{2d}Q^dQ^Lv_{2l-1}+\sum_{l\textrm{ even}} Q^{2d}Q^Kv_{k,l-1}+D_{-2}.$$
where $D_{-2}$ denotes the decomposable part. The classes
$Q^dQ^Lv_{2l-1}$ and $Q^Kv_{k,l-1}$ are of dimension $2d-2$.
Although one may decide to rewrite this sum in terms of primitives,
however this form is enough for us to get a contradiction. Observe
that
$$Sq^1_*\xi_{-2}=\sum_{l\textrm{ odd}}Q^{2d-1}Q^dQ^Lv_{2l-1}+\sum_{l\textrm{ even}} Q^{2d-1}Q^Kv_{k,l-1}+Sq^1_*D_{-2}.$$
Notice that
$Sq^1_*D_{-2}$ is a decomposable. On the other hand terms
$Q^{2d-1}Q^dQ^Lv_{2l-1}$ and $Q^{2d-1}Q^Kv_{k,l-1}$ are separated
under this action as the map to distinct terms under the homology
suspension, which also shows that these classes do not belong to
$\ker\sigma_*$. This shows that $Sq^1_*\xi_{-2}\neq 0$. But this is
a contradiction to the fact that $\xi_{-2}$ is $A$-annihilated.
Hence we have completed the proof.
\end{proof}

A similar approach maybe taken to prove the following.
\begin{lmm}
Let $\theta=\zeta^2\in H_*Q_0S^0$ be a spherical class. Then $\zeta$
must be an odd dimensional class.
\end{lmm}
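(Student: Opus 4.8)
The plan is to mirror the proof of the preceding theorem (that $\xi=\zeta^{2^t}$ cannot be spherical for $t>1$), desuspending the spherical class $\theta$ twice rather than analysing $\zeta$ directly. First I would record that, since $\theta=\zeta^2$ is spherical, it is $A$-annihilated and primitive; writing $Sq^{2i}_*(\zeta^2)=(Sq^i_*\zeta)^2$ and $Sq^{2i+1}_*(\zeta^2)=0$ and using that squaring is injective in the polynomial algebra $H_*Q_0S^0$, one sees $Sq^i_*\zeta=0$ for all $i>0$, so $\zeta$ is itself $A$-annihilated, while $\zeta$ primitive follows by extracting a square root of the coproduct relation $\psi(\zeta^2)=\zeta^2\otimes 1+1\otimes\zeta^2$. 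Thus we may write $\zeta=\sum Q^Jp'_{2j+1}$ modulo decomposables, with each summand of common dimension $d=\dim\zeta$, so that $\theta=\zeta^2=\sum Q^dQ^Jp'_{2j+1}$.

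Assume for contradiction that $d$ is even. As in the previous theorem, $\theta$ pulls back through the homology suspension to a spherical class $\xi_{-1}\in H_{2d-1}Q_0S^{-1}$ with $\sigma_*\xi_{-1}=\theta$. Since $\sigma_*$ is injective on indecomposables and carries $Q^dQ^Jw'_{2j}$ to $Q^dQ^Jp'_{2j+1}$, the indecomposable part of $\xi_{-1}$ is forced to be $\sum Q^dQ^Jw'_{2j}$, and the remaining decomposable part $D_{-1}$ is an odd-dimensional decomposable primitive, hence $0$. Applying Lemma 3.9, $\xi_{-1}$ pulls back once more to a spherical class $\xi_{-2}\in H_{2d-2}QS^{-2}$ lying in the exterior subalgebra generated by $\im(\Omega^2\lambda_\C)_*$, so that $\xi_{-2}=\sum Q^dQ^Lv_{\ldots}+D_{-2}$ with $D_{-2}$ decomposable and the $v$-generators mutually independent.

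Now I would apply $Sq^1_*$. By the Nishida relation $Sq^1_*Q^d=\binom{d-1}{1}Q^{d-1}$, and since $d$ is even the coefficient $\binom{d-1}{1}=d-1\equiv 1\pmod 2$; hence $Sq^1_*\xi_{-2}=\sum Q^{d-1}Q^Lv_{\ldots}+Sq^1_*D_{-2}$, where $Sq^1_*D_{-2}$ is decomposable. The leading terms $Q^{d-1}Q^Lv_{\ldots}$ are nonzero, and under the homology suspension they map to the distinct primitive classes $Q^{d-1}Q^Lp^{S^{-1}}_{\ldots}$, so they are separated from one another, do not lie in $\ker\sigma_*$, and cannot be cancelled by the decomposable term $Sq^1_*D_{-2}$. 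Therefore $Sq^1_*\xi_{-2}\neq 0$, contradicting the fact that the spherical class $\xi_{-2}$ is $A$-annihilated. This forces $d$ to be odd, i.e. $\zeta$ is odd dimensional.

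The reason for desuspending twice, rather than applying $Sq^1_*$ already in $H_*Q_0S^{-1}$, is exactly that $H_*Q_0S^{-1}$ carries relations among its generators (such as $Q^3w_0=Q^2Q^1w_0$), so the terms produced there might cancel; it is the independence of the $v$-generators in $H_*QS^{-2}$ that makes the non-cancellation rigorous. I expect this separation argument -- verifying that the surviving terms $Q^{d-1}Q^Lv_{\ldots}$ map to pairwise distinct primitives under $\sigma_*$ and are not absorbed by the decomposable residue -- to be the main obstacle, while the parity computation $\binom{d-1}{1}\equiv 1$ and the vanishing of $D_{-1}$ are the routine ingredients.
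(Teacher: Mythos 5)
Your proof is correct and is essentially the paper's own argument: the paper explicitly says this lemma follows by ``a similar approach'' to the preceding theorem on $\zeta^{2^t}$ and leaves it to the reader, and your write-up is precisely that adaptation (desuspend twice, use the Nishida coefficient $\binom{d-1}{1}\equiv 1 \pmod 2$ when $d$ is even, and separate the surviving terms $Q^{d-1}Q^Lv$ by their images under the homology suspension). One correction to your closing motivation: the obstruction to applying $Sq^1_*$ already in $H_*Q_0S^{-1}$ is not the relations among its generators, but that there $Q^{d-1}$ is the top operation on the $(d-1)$-dimensional classes $Q^Jw'_{2j}$, so each term $Sq^1_*\bigl(Q^dQ^Jw'_{2j}\bigr)=Q^{d-1}Q^Jw'_{2j}=(Q^Jw'_{2j})^2$ vanishes identically in that exterior algebra and no contradiction can be extracted at that stage.
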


We leave the proof to the reader, and only note that a similar claim
is valid if we replace $Q_0S^0$ by $QS^n$ with $n>0$ in the above
theorem.

\subsection{Completing the proof of the Main Theorem}

Our main theorem is stated in terms of primitive classes in
$H_{2i+1}Q_0S^0$. Recall that any spherical class in $H_*Q_0S^0$ may
be written as a linear combination of terms of the form
$Q^Ip_{2i+1}$ with $I$, and not necessarily $(I,2i+1)$, being
admissible. The class $p_{2i+1}\in H_{2i+1}Q_0S^0$ is the unique
primitive such that
$$p_{2i+1}=x_{2i+1}+D_{2i+1}$$
with $D_{2i+1}$ being is sum of decomposable terms. This then makes
the following observation evident.
\begin{lmm}
If $(I,2n+1)$ is admissible, then modulo decomposable terms
$$Q^Ip_{2n+1}=Q^Ix_{2n+1}.$$
\end{lmm}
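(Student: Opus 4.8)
The plan is to reduce the whole statement to a single structural fact: the Kudo--Araki operations carry decomposables to decomposables, so they descend to well-defined operations on the module of indecomposables $QH_*Q_0S^0$. Granting this, the lemma is immediate. By the defining property recalled just above, $p_{2n+1}=x_{2n+1}+D_{2n+1}$ with $D_{2n+1}$ decomposable, so applying $Q^I$ gives
$$Q^Ip_{2n+1}=Q^Ix_{2n+1}+Q^ID_{2n+1}.$$
If $Q^ID_{2n+1}$ is decomposable, then $Q^Ip_{2n+1}$ and $Q^Ix_{2n+1}$ represent the same class in $QH_*Q_0S^0$, which is exactly the assertion that they agree modulo decomposable terms.

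So the real work is to show that each $Q^k$ sends decomposables to decomposables. First I would invoke the Cartan formula for the Dyer--Lashof operations \cite{1},
$$Q^k(ab)=\sum_{i+j=k}Q^ia\,Q^jb.$$
When $a,b$ lie in the augmentation ideal, the instability conventions $Q^ia=0$ for $i<\dim a$ and $Q^{\dim a}a=a^2$ force every surviving summand to have both factors in strictly positive degree, since $\dim Q^ia\geqslant 2\dim a>0$ and likewise for $b$. Hence each surviving term $Q^ia\,Q^jb$ is a product of two positive-degree classes, i.e. decomposable, and therefore $Q^k(ab)$ is decomposable. Because $D_{2n+1}$ is a sum of such products (it is decomposable and sits in positive degree $2n+1$), this shows $Q^kD_{2n+1}$ is decomposable for every $k$.

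To pass from a single operation to the iterated operation $Q^I=Q^{i_1}\cdots Q^{i_r}$, I would simply iterate the previous step: $Q^{i_r}D_{2n+1}$ is decomposable by the above, then $Q^{i_{r-1}}$ applied to a decomposable is again decomposable, and continuing up to $Q^{i_1}$ yields that $Q^ID_{2n+1}$ is decomposable. This closes the argument.

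The lemma is essentially formal, so there is no deep obstacle; the only point requiring care is the bookkeeping with the Dyer--Lashof conventions in the Cartan expansion. Since $2n+1>0$, everything stays in positive degrees and we never meet $Q^0$ acting on a degree-zero class, and it is precisely the vanishing $Q^ia=0$ for $i<\dim a$ that guarantees the surviving summands are genuinely decomposable rather than accidentally contributing an indecomposable generator. I would also remark that the admissibility hypothesis on $(I,2n+1)$ plays no role in the decomposability argument itself; it only serves to display both sides in the standard admissible basis. In particular, in the excess-zero case both $Q^Ix_{2n+1}$ and $Q^Ip_{2n+1}$ are themselves squares, so the identity reads $0=0$ modulo decomposables.
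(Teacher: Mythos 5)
Your proof is correct and takes the same route the paper intends: the paper states this lemma as an evident consequence of the defining property $p_{2n+1}=x_{2n+1}+D_{2n+1}$ with $D_{2n+1}$ decomposable, and your Cartan-formula argument (with the instability conditions $Q^ia=0$ for $i<\dim a$, $Q^{\dim a}a=a^2$) that each $Q^k$, hence the iterate $Q^I$, carries decomposables to decomposables is precisely the detail the paper leaves unstated. Your closing remarks about the role of admissibility and the excess-zero case are also accurate, so nothing further is needed.
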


Notice that a spherical class is $A$-annihilated and primitive.
Combining this with Remark 3.11 enables us to have the following
observation.
\begin{lmm}
Suppose $\xi_0\in H_*Q_0S^0$ is an $A$-annihilated primitive class
with $\sigma_*\xi_0\neq 0$. Then
$$\xi_0=\sum Q^Ix_{2i+1}$$
modulo decomposable terms, where $(I,2i+1)$ runs over certain
admissible sequences of positive excess.
\end{lmm}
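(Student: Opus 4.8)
The plan is to start from primitivity, rewrite $\xi_0$ in the $x$-generators, and then use $A$-annihilation to force the surviving generators to be odd-dimensional. Since $\xi_0$ is primitive, the first description of the primitives recalled in the Introduction gives $\xi_0\equiv\sum Q^Ip_{2i+1}$ modulo decomposables, with each $I$ admissible but $(I,2i+1)$ not necessarily so. I would then move to the $x$-generators: from $p_{2i+1}=x_{2i+1}$ modulo decomposables and the Cartan formula for the Kudo--Araki operations (which carries the square of the augmentation ideal into itself), one gets $Q^Ip_{2i+1}\equiv Q^Ix_{2i+1}$ modulo decomposables for every $I$, the admissible case being exactly the preceding lemma. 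Writing $x_{2i+1}=Q^{2i+1}[1]*[-2]$ and using the Adem relations of the Dyer--Lashof algebra to put the non-admissible monomials into admissible form, then discarding every term of excess $0$ (these are squares), I arrive at
$$\xi_0\equiv\sum Q^Lx_m\pmod{\mathrm{decomposables}},\qquad L\ \mathrm{admissible},\ \ex(Q^Lx_m)>0,$$
where a priori the index $m$ may be even. The whole content of the lemma is that in fact $m$ is always odd.

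To see this I would suspend. The homology suspension $\sigma_*\colon H_*Q_0S^0\to H_{*+1}QS^1$ annihilates decomposables, commutes with the Steenrod operations, and, by Kudo's transgression theorem (as already used above), commutes with the $Q^i$; moreover $\sigma_*x_m=Q^mg_1$, so $\sigma_*(Q^Lx_m)=Q^{(L,m)}g_1$. Hence, modulo decomposables,
$$\sigma_*\xi_0\equiv\sum Q^{(L,m)}g_1,$$
an $A$-annihilated class, nonzero by hypothesis, expressed over the single reduced generator $g_1\in\overline H_1S^1$. For a sum over one fixed base class Corollary~3 applies verbatim, so each positive-excess term $Q^{(L,m)}g_1$ is $A$-annihilated; the Remark following Theorem~1 then forces the full sequence $(L,m)$ to have only odd entries, so $m=2j+1$. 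Thus every term of $\xi_0$ that survives the suspension has odd-dimensional base generator, which is the assertion.

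Two points need separate care, and I expect them to be the main obstacle. First, $\sigma_*$ drops excess by one, since $\ex(Q^{(L,m)}g_1)=\ex(Q^{(L,m)}[1])-1$; hence the generators of $\xi_0$ of excess exactly one become decomposable after suspension and are invisible to the argument above. For these I would run the separation mechanism of Subsection~2.1 directly on $H_*Q_0S^0$: because $Sq^a_*[-2]=0$ for $a>0$, the Cartan formula reduces the Steenrod action on $\sum Q^Lx_m$ to the action of the Dyer--Lashof algebra on the fundamental class $[1]$, where the Nishida relations respect length and the statement of Note~4 shows that an $A$-annihilated sum of positive-excess monomials has every summand $A$-annihilated; the Remark following Theorem~1 again gives $m$ odd. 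Second, one must check that the translation $*[-2]$ is transparent to this computation, i.e.\ that its Cartan-correction terms are decomposable and so do not disturb the indecomposable leading terms used in the separation; this is the delicate bookkeeping, which I would carry out exactly as in the standard structure theory of $H_*QS^0$. The hypothesis $\sigma_*\xi_0\neq0$ is what makes the whole statement non-vacuous: it guarantees that the indecomposable part of $\xi_0$ is genuinely nonzero (equivalently, that $\xi_0$ is not a square), so that the displayed sum is the honest indecomposable representative of $\xi_0$ rather than the empty sum of the $\sigma_*=0$ case.
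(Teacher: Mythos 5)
You take a genuinely different route from the paper. The paper's proof never suspends: it works entirely inside $H_*Q_0S^0$ and uses primitivity through the Milnor--Moore square root map $r$ of Remark 3.11. There, the indecomposable part of the primitive $\xi_0$ lies in $\ker r$, so any monomial $Q^Ix_n$ with $n$ even must contain an odd entry in $I$; taking $s_0$ to be the position of the \emph{last} odd entry makes the following entry (possibly $n$ itself) even, which is a violation of condition $3$ of Theorem 1, and the single operation $Sq^{2^{s_0}}_*$ then has leading output of the \emph{same} excess as $Q^Ix_n$, hence indecomposable; Note 4 separates these outputs and yields $Sq^{2^{s_0}}_*\xi_0\neq 0$, a contradiction. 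Your suspension argument is correct, and arguably cleaner, in the special case where every monomial of $\xi_0$ has excess at least $2$ (for instance when $\xi_0$ is even dimensional, since for these monomials excess and dimension have the same parity); but in general it has a gap which your two ``points of care'' do not close.

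The gap is this. Excess-one monomials do occur in odd dimensions ($Q^4x_3$ is admissible, of dimension $7$ and excess $1$), and they suspend to squares. Then $\sigma_*\xi_0$ is a sum of positive-excess monomials \emph{plus squares}, and Corollary 3 does not ``apply verbatim'': its hypothesis that every term have positive excess fails, and the Wellington example in Subsection 2.1 is exactly a warning that a square can combine with other terms into an $A$-annihilated sum none of whose summands is $A$-annihilated. So the excess-one terms are not merely ``invisible''; their presence invalidates the deduction for \emph{all} the terms. Your fallback --- apply Note 4 to $\sum Q^Lx_m$ directly in $H_*Q_0S^0$ --- does not repair this, because Note 4 requires the sum itself to be $A$-annihilated, whereas what you actually know is only that $Sq^k_*\bigl(\sum Q^Lx_m\bigr)=Sq^k_*D$ is decomposable for every $k>0$, $D$ being the decomposable tail of $\xi_0$. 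Annihilation modulo decomposables is strictly weaker: $Q^4x_3$ is \emph{not} $A$-annihilated, yet its only nonzero Steenrod image is $Sq^1_*Q^4x_3=Q^3x_3=x_3^2$, a square, so it is undetectable modulo decomposables. Hence you cannot conclude that each summand is $A$-annihilated; indeed that intermediate claim is stronger than the lemma asserts and is not expected to hold in general (compare case 2 of the Main Theorem, which allows non-annihilated terms). What is needed is a guarantee that for every monomial with \emph{even} base one can choose a Steenrod witness whose output is indecomposable, so that the separation mechanism survives passage to $QH_*Q_0S^0$; that guarantee is precisely what the paper's square-root-map step provides, it is the only place primitivity is used beyond writing $\xi_0$ in the $Q^Ip_{2i+1}$ basis, and it is the ingredient missing from your proposal.
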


\begin{proof}
The fact that $\sigma_*\xi_0\neq 0$ implies that modulo decomposable
terms
$$\xi_0=\sum Q^Ix_n$$
where $(I,n)$ is admissible with $\ex(Q^Ix_n)>0$. The fact that
$\xi_0$ is an indecomposable primitive implies that indecomposable
part of $\xi_0$ belongs to the kernel of the square root map
$r:H_*Q_0S^0\to H_*Q_0S^0$. Notice that if we have two distinct
admissible sequences $(J,j)$ and $(K,k)$ with only even entries,
then $rQ^Jx_j\neq rQ^Kx_k$. Hence the indecomposable part of $\xi_0$
belongs to the kernel of $r$ if and only if every $Q^Ix_n$ belong to
the kernel. We show that assuming $n\neq 2i+1$ leads to a
contradiction.\\
Assume that $n$ is even. Since $Q^Ix_n$ belong to $\ker r$, then
$I=(i_1,\ldots, i_t)$ must have at least one odd entry. Let
$s_0=\max(s:1\leqslant s\leqslant t,i_s\textrm{ is odd})$. Then
$i_{s_0+1}$ is even. Notice that if $s_0=t$, then we have $x_n$ next
to it which is even. In this case one applies $Sq^{2^{s_0}}_*$ to
$\xi_0$. Notice that according to Theorem 1 a class $Q^Ix_n$ with
$(I,n)$ having at least one even entry is not $A$-annihilated.
Moreover, according to Note 4, all terms of the form $Q^Ix_n$ with
$\ex(Q^Ix_n)>0$ are separated under the action of this operation
from each other. Moreover, notice that
$$\ex(Sq^{2^{s_0}}_*Q^Ix_n)=\ex(Q^Ix_n)>0,$$
which implies that the outcome is not a decomposable, and hence is
separated from any other decomposable term. This implies that
$Sq^{2^{s_0}}_*\xi_0\neq 0$ which contradicts the fact that $\xi_0$
must be $A$-annihilated. Hence $n$ must be odd. This implies that
modulo decomposable terms
$$\xi=\sum Q^Ix_{2i+1},$$
with $(I,2i+1)$ admissible.
\end{proof}

The previous lemma together with the above observation, leads us to
a better and more clear expression for a potential spherical classes
in $H_*Q_0S^0$. We have the following.

\begin{crl}
Let $\xi_0\in H_*Q_0S^0$ be $A$-annihilated primitive class with
$\sigma_*\xi_0\neq 0$. Then
$$\xi_0=\sum Q^Ip_{2i+1}$$
with $(I,2i+1)$ admissible modulo decomposable terms. If $\xi_0$ is
odd dimensional, then the decomposable part is trivial. If $\xi_0$
is even dimensional, then the decomposable part is either trivial or
square of a primitive.\\
Moreover, assume that $l(I)>1$ for every $I$ involved in the above
expression for $\xi_0$. Then $I$ will have only odd entries.
\end{crl}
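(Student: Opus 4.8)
The plan is to read the displayed expansion off the two preceding results, to identify the decomposable remainder by a Hopf-algebra argument, and then to obtain the parity statement by pushing $\xi_0$ into $H_*QS^1$ along the homology suspension and applying Corollary 3.

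Since $\sigma_*\xi_0\neq 0$, the preceding lemma gives $\xi_0=\sum Q^Ix_{2i+1}$ modulo decomposables, with each $(I,2i+1)$ admissible of positive excess; replacing each $Q^Ix_{2i+1}$ by the primitive $Q^Ip_{2i+1}$ (which agrees with it modulo decomposables) alters only the decomposable part, so $\xi_0=\sum Q^Ip_{2i+1}$ modulo decomposables. As each $p_{2i+1}$ is primitive and the Kudo--Araki operations preserve primitivity, every $Q^Ip_{2i+1}$ is primitive, whence $D:=\xi_0-\sum Q^Ip_{2i+1}$ is a decomposable primitive. By the exact sequence recalled in Remark 3.11 the kernel of $PH_*Q_0S^0\to QH_*Q_0S^0$ is $P(sH_*Q_0S^0)$, so over $\Z/2$ a decomposable primitive is the square of a primitive; hence $D=P^2$ with $P$ primitive. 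A parity count finishes the first assertions: if $\xi_0$ is odd dimensional then $D$ is odd dimensional while $P^2$ is even dimensional, forcing $D=0$, whereas if $\xi_0$ is even dimensional then $D$ is a (possibly trivial) square of a primitive.

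For the last assertion I would suspend. The homology suspension $\sigma_*\colon H_*Q_0S^0\to H_{*+1}QS^1$ annihilates decomposables and, by Kudo's transgression theorem, commutes with the Dyer--Lashof operations; since $\sigma_*x_{2i+1}=Q^{2i+1}g_1$ this yields $\sigma_*\xi_0=\sum Q^{(I,2i+1)}g_1$, a sum over the same sequences $(I,2i+1)$ of admissible positive-excess generators of $H_*QS^1$ (distinct sequences give distinct nonzero generators, so no term is lost or collapses). As $\sigma_*$ commutes with the dual Steenrod operations, $\sigma_*\xi_0$ is again $A$-annihilated, and being a sum of positive-excess terms built on the single class $g_1$ it falls under Corollary 3; therefore each $Q^{(I,2i+1)}g_1$ is $A$-annihilated. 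Finally the remark following Theorem 1 applies to the admissible sequence $(I,2i+1)$: any interior even entry is pushed by condition $3$ down to an even $i_1$ (this chaining is exactly where $l(I)>1$ is used), and an even $i_1$ contradicts condition $2$ together with positive excess. Since $2i+1$ is odd, $(I,2i+1)$ having no even entry means precisely that $I$ has only odd entries, and the identification of indexing sequences carries this back to $\xi_0$.

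The delicate point, and my reason for routing the parity statement through $\sigma_*$ rather than arguing directly in $H_*Q_0S^0$, is the interference of $P^2$ and of the square-type outputs produced when condition $2$ fails. Directly, a failure of condition $2$ gives a term of zero excess, i.e.\ a square, which can cancel against $Sq^k_*P^2$ and against squares arising from other summands, so the separation of not-$A$-annihilated classes underlying Corollary 3 cannot be invoked verbatim (this is precisely the phenomenon that Note 4 warns about once trivial-excess terms are allowed). Suspending disposes of the difficulty at once, because $\sigma_*$ kills every decomposable and returns a genuine sum of positive-excess generators, to which Corollary 3 applies cleanly. The only things requiring care are the identity $\sigma_*x_{2i+1}=Q^{2i+1}g_1$ and the injectivity of $\sigma_*$ on the relevant generators; both are standard features of the homology suspension on $H_*Q_0S^0$.
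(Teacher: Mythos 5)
Your first two paragraphs reproduce the paper's own argument for the decomposition: the expansion $\xi_0=\sum Q^Ix_{2i+1}$ from the preceding Lemma 3.14, the replacement of $Q^Ix_{2i+1}$ by $Q^Ip_{2i+1}$ via Lemma 3.13, the Milnor--Moore exact sequence of Remark 3.11 forcing the decomposable primitive remainder to be the square of a primitive, and the parity count. That part is correct and is essentially verbatim what the paper does.

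The gap is in your treatment of the last assertion, in the case where $\xi_0$ is odd dimensional. Suspension into $H_*QS^1$ lowers excess by one, because $\dim g_1=1$: one has $\ex\bigl(Q^{(I,2i+1)}g_1\bigr)=\ex\bigl(Q^Ix_{2i+1}\bigr)-1$. Moreover excess and dimension agree mod $2$, so when $\xi_0$ is odd dimensional its terms have odd excess, which may equal $1$; any such term suspends to a class of excess $0$, i.e.\ to a square. For instance $Q^4x_3$ is admissible of excess $1$, and $\sigma_*Q^4x_3=Q^4Q^3g_1=(Q^3g_1)^2$. Hence for odd-dimensional $\xi_0$ the class $\sigma_*\xi_0$ is in general a sum of positive-excess generators \emph{plus squares}, and Corollary 3 cannot be invoked for such mixed sums; that is exactly the failure mode recorded in Note 4 and exhibited by Wellington's Example 2.11, which you yourself cite. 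So your assertion that $\sigma_*$ ``returns a genuine sum of positive-excess generators, to which Corollary 3 applies cleanly'' is false precisely where you need it. Note that the parity split runs opposite to your meta-discussion: when $\xi_0$ is \emph{even} dimensional every term has even, hence at least $2$, excess, so all suspended terms have positive excess and your suspension argument is exactly the paper's argument for that case (the summand $P^2$ dying under $\sigma_*$); when $\xi_0$ is odd dimensional there is no $P^2$ at all (you proved the decomposable part vanishes), so nothing obstructs arguing directly in $H_*Q_0S^0$, where every term $Q^Ix_{2i+1}$ does have positive excess. This is what the paper does: if some entry $i_s$ of some $I$ were even, applying $Sq^{2^{s-1}}_*$ together with the separation results of Section 2.1 (Note 4) shows $\xi_0$ is not $A$-annihilated, a contradiction. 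Your odd-dimensional case needs to be replaced by a direct argument of this kind, or else you must explain how to handle the square terms created by suspension.
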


\begin{proof}
Notice that $\xi_0=\sum Q^Ix_{2i+1}$ modulo decomposable terms.
Previous lemma allows us to replace $Q^Ix_{2i+1}$ with $Q^Ip_{2i+1}$
modulo decomposable terms. Therefore $\xi_0=\sum Q^Ip_{2i+1}$ modulo
decomposable terms. However this decomposable part is primitive,
hence it must be square. If $\xi_0$ is an odd dimensional class,
then the decomposable part is trivial. If $\xi_0$ is even
dimensional then it is either square or trivial.\\
If $\xi_0$ is even dimensional, then we may write
$$\xi_0=\sum Q^Ip_{2i+1}+P^2$$
with $(I,2i+1)$ admissible, and $P$ a primitive class. The fact that
decomposable terms die under suspension, together with the Lemma
3.13 show that
$$\sigma_*Q^Ip_{2i+1}=\sigma_* Q^Ix_{2i+1}=Q^IQ^{2i+1}g_1\in
H_*QS^1.$$ Hence, we have
$$\sigma_*\xi_0=\sum Q^IQ^{2i+1}g_1$$
being an odd dimensional $A$-annihilated class. Corollary $3$, then
implies that each term $Q^IQ^{2i+1}g_1$ must be $A$-annihilated
which in particular means that $I$ has only odd entries, and
$(I,2i+1)$ satisfies condition $3$ of Theorem 1.\\
If $\xi_0$ is odd dimensional, then
$$\xi_0=\sum Q^Ip_{2i+1}$$
with $(I,2i+1)$ being admissible. If we have $I=(i_1,\ldots,i_r)$,
with $i_s$ being even, then applying $Sq^{2^{s-1}}_*$ will show that
$\xi_0$ is not $A$-annihilated which is a contradiction. This then
shows that $I$ must have only odd entries. This completes the proof.
\end{proof}

To complete the proof, we need to translate the above results and
express our results in terms of $Q^Ip'_{2i+1}$ whereas the above
results are expressed in terms of primitive classes $Q^Ip_{2i+1}$.
The main reason for this, is the difference that $p_{2i+1}$ and
$p'_{2i+1}$ show under the action of the Steenrod algebra. The
following remark explains this more.
\begin{rmk}
The action of the Steenrod algebra on $H_*SO$ is given by
$$Sq^k_*p^{SO}_{2n+1}={2n+1-k\choose k}p^{SO}_{2n+1-k}.$$
This makes it easy to see that the primitive classes $p'_{2n+1}$
behave similar to $x_{2n+1}$ under the action of the Steenrod
algebra, i.e.
$$Sq^k_*p'_{2n+1}={2n+1-k\choose k}p'_{2n+1-k}.$$
Note that the above action is trivial when $k$ is odd.\\
We refer the reader to \cite[Lemma 5.2]{15} to see that
$$Sq^k_*p_n={n-k-1\choose k}p_{n-k}.$$
Notice that mod $2$, we have
$${2i-2k\choose 2k}={2i-2k+1\choose 2k}.$$
This implies that $p_{2n+1}$, and $p'_{2n+1}$ behave in the same way
under the operations $Sq^{2k}_*$. However, the primitive classes
$p'_{2n+1}$ are annihilated under the operations $Sq^{2k+1}_*$
whereas the primitive classes $p_{2n+1}$ have chance to survive
under these operation, e.g. $Sq^1_*p_3=x_1^2$.\\
Recall that the action of $Sq^k_*$ on $x_{2i+1}$ is given by
$$Sq^k_*x_{2i+1}={2n+1-k\choose k}x_{2n+1-k}.$$
This shows that the primitive classes $p'_{2i+1}$ behaves like
$x_{2i+1}$ under the action of the Steenrod operation. On the other
hand, the class $Q^Ix_{2i+1}$ with $(I,2i+1)$ admissible behaves
like $Q^IQ^{2i+1}$ under the action of the Steenrod algebra. This
then shows that if $(I,2i+1)$ is admissible and $l(I)>0$, then
$Q^Ip'_{2i+1}$ behaves like $Q^IQ^{2i+1}$ under the action of the
Steenrod algebra. Notice that this claim is not true about the
classes $Q^Ip_{2i+1}$.
\end{rmk}

The above remark combined with Note 4 implies the following.
\begin{thm}
Suppose $\sum Q^Ip'_{2i+1}$ is an $A$-annihilated sum, with
$l(I)>0$. Then each term $Q^Ip'_{2i+1}$ must be $A$-annihilated.
\end{thm}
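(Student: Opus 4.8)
The plan is to reduce the statement to the corresponding fact inside the Dyer-Lashof algebra $R$, namely Note $4$, via the translation recorded in the remark immediately preceding this theorem. That remark establishes that, for $(I,2i+1)$ admissible with $l(I)>0$, the primitive class $Q^Ip'_{2i+1}\in H_*Q_0S^0$ behaves under the Steenrod algebra exactly as the Dyer-Lashof monomial $Q^IQ^{2i+1}$ does in $R$: the Nishida relations computing $Sq^a_*Q^Ip'_{2i+1}$ coincide, term by term, with those computing $Sq^a_*Q^IQ^{2i+1}$, since $p'_{2i+1}$ has the same Steenrod action as $x_{2i+1}$ and $x_{2i+1}$ mimics the bottom operation $Q^{2i+1}$. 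Because distinct admissible sequences $(I,2i+1)$ yield linearly independent indecomposable generators, this translation is faithful, so the hypothesis that $\sum Q^Ip'_{2i+1}$ is $A$-annihilated is equivalent to the assertion that the formal sum $\sum Q^IQ^{2i+1}$ is $A$-annihilated in $R$.

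Next I would exploit the fact, observed in Section $2$, that the Nishida relations respect length: if $Sq^a_*Q^K=\sum Q^LSq^{a^L}_*$ then every $L$ satisfies $l(L)=l(K)$. Hence monomials $Q^IQ^{2i+1}$ whose outer sequences $(I,2i+1)$ have different lengths cannot cancel one another under any $Sq^a_*$, so the sum splits as a direct sum indexed by the length $l(I)+1$, and each homogeneous-length piece is $A$-annihilated on its own. Within a fixed length all the surviving terms $Q^IQ^{2i+1}$ are admissible monomials of equal length, so I can apply Note $4$, the many-term form of Corollary $3$ inside $R$, to conclude that each $Q^IQ^{2i+1}$ is separately $A$-annihilated. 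Translating back through the remark, each $Q^Ip'_{2i+1}$ is $A$-annihilated, which is the claim.

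The step requiring care is the excess hypothesis hidden in Note $4$: the separation argument of Corollary $3$ holds only for terms of positive excess, and Wellington's example in Section $2$ (\cite[Remark 11.26]{15}) shows it genuinely fails once a term of zero excess, that is a square, is admitted. I would therefore verify that every term occurring here has positive excess. Each $Q^Ip'_{2i+1}$ with $l(I)>0$ is a primitive; were it of zero excess it would be a decomposable primitive and hence a square, contributing no indecomposable generator to the sum. Thus the sum of indecomposable generators to which this theorem is applied consists entirely of positive-excess terms, precisely the regime in which Note $4$ is valid, and confirming that the forbidden zero-excess case never arises is the only real obstacle; once this is in place, the reduction to $R$ together with Note $4$ completes the proof.
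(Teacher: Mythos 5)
Your proposal is correct and follows essentially the same route as the paper: the paper's entire proof is the single observation that the preceding remark (that $Q^Ip'_{2i+1}$ with $(I,2i+1)$ admissible and $l(I)>0$ behaves under the Steenrod algebra exactly like $Q^IQ^{2i+1}$ in the Dyer-Lashof algebra $R$) combined with Note $4$ gives the claim, which is precisely your reduction. Your extra care about length-homogeneity (via the length-preservation of the Nishida relations) and about the positive-excess hypothesis of Note $4$ only makes explicit what the paper leaves implicit; in the paper's actual applications the sums are odd dimensional, so zero-excess (square) terms cannot occur and the regime of Note $4$ is automatic.
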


Next, we compare the behavior of primitive classes $Q^Ip'_{2i+1}$
and $Q^Ip_{2i+1}$ under the homology suspension.
\begin{rmk}
We like to draw the reader's attention to the behavior of
$Q^Ip_{2n+1}$, and $Q^Ip_{2n+1}'$ under the homology suspension.
First let $I=\phi$. Recall that modulo decomposable terms,
$$\begin{array}{lll}
p_{2n+1} & = & x_{2n+1},\\
p_{2n+1}'& = & x_{2n+1}+Q^{2n}x_1.
\end{array}$$
We then obtain,
$$\begin{array}{lll}
\sigma_*p_{2n+1} & = & Q^{2n+1}g_1,\\
\sigma_*p_{2n+1}'& = & Q^{2n+1}g_1+(Q^ng_1)^2.
\end{array}$$
Now suppose $I=(i_1,\ldots,i_r)$ is a sequence with $i_r$ odd such
that $(I,2n+1)$ is admissible. We then have
$$\sigma_*Q^IQ^{2n}x_1=Q^IQ^{2n}g_1^2=Q^I(Q^ng_1)^2=0.$$
In fact we don't need to restrict to $i_r$, similar statement
holds if we assume only $I$ has at least one odd entry.\\
Notice that $Q^IQ^{2n}x_1$ is a primitive class, which can be
written in terms of $Q^Jx_j$ modulo decomposable terms, where
$(J,j)$ is admissible. Any class $Q^Jx_j$ with $(J,j)$ dies under
suspension, if and only if $\ex(Q^Jx_j)=0$, i.e. $Q^Jx_j$ is
decomposable. Hence $Q^IQ^{2n}x_1$ is a decomposable primitive, and
hence a square term. We then observe that if we choose $I$ to be
even dimensional, then $Q^IQ^{2n}x_1$ is odd dimensional which makes
it impossible to be a square, hence $Q^IQ^{2n}x_1=0$. In this case
we have
$$\begin{array}{lllll}
\sigma_*Q^Ip_{2n+1}'&=&Q^IQ^{2n+1}g_1&=&\sigma_*Q^Ip_{2n+1},
\end{array}$$
as well as
$$Q^Ip_{2n+1}=Q^Ip_{2n+1}.$$
\end{rmk}

Now, we restrict our attention to the classes $Q^Ip_{2i+1}$ with
$l(I)>0$. The result reads as following.

\begin{lmm}
Suppose $\xi_0\in H_*Q_0S^0$ is an odd dimensional $A$-annihilated
primitive class. Then
$$\xi_0=\sum Q^Ip'_{2i+1},$$
with $(I,2i+1)$ admissible, and each $Q^Ip_{2i+1}$ being
$A$-annihilated.
\end{lmm}

The above theorem is just a combination of Corollary 3.15, Theorem
3.17, and Remark 3.18.\\

Our main theorem now follows from combining Theorem 3.10, Lemma
3.11, Corollary 3.15 and Lemma 3.19.

\end{document}